\def\squarebox#1{\hbox to #1{\hfill\vbox to #1{\vfill}}}
\newcommand{\R}{{\mathbb R}}
\newcommand{\C}{{\mathbb C}}
\newcommand{\N}{{\mathbb N}}
\renewcommand{\Re}{\mathop{\rm Re}\nolimits}
\renewcommand{\Im}{\mathop{\rm Im}\nolimits}
\theoremstyle{plain}
\newtheorem{thm}{Theorem}
\newtheorem{rem}{Remark}
\newtheorem{prop}{Proposition}
\begin{document}
\title[Weyl formula]{Weyl formula for the eigenvalues of the dissipative acoustic operator}

\author[V. Petkov]{Vesselin Petkov}

%\date{Received: date / Accepted: date}

\address{Universit\'e de Bordeaux, Institut de Math\'ematiques de Bordeaux, 351, Cours de la Lib\'eration,
33405 Talence, France}
\email{petkov@math.u-bordeaux.fr} 
\thanks{e-mail:petkov@math.u-bordeaux.fr}

\numberwithin{equation}{section}

\def\S{{\mathbb S}}
\def\In{\mbox{\rm Int}}
\def\re{\mbox{\rm Re}\:}
\def\im{\mbox{\rm Im}\:}
\def\ts{\tilde{\sigma}}
\def\p{{\mathcal P}}
\def\h{{\mathcal H}}
\def\pa{\partial}
\def\om{\omega}
\def\ep{\epsilon}
\def\curl{{\rm curl}\,}
\def\dive{{\rm div}\,}
\def\grad{{\rm grad}\,}
\def\la{\langle}
\def\ra{\rangle}
\def\ii{{\bf i}}
\def\pa{\partial}
\def\nc{{\mathcal N}}
\def\lg{L^2(\Gamma)}
\def\he{\hat{E}}
\def\hh{\hat{H}}
\def\hc{\hat{c}}
\def\t{tan}
\def\vt{\vert_{tan}}
\def\bvt{\big\vert_{tan}}
\def\hg{H^1_h(\Gamma)}
\def\hn{\hat{n}}
\def\vg{\vert_{\Gamma}}
\def\th{\tilde{h}}
\def\cc{{\mathcal C} }
\def\hd{\la hD\ra}
\def\lc{{\mathcal L}}
\def\rc{{\mathcal R}}
\def\cb{{\bf C}}
\def\bb{{\mathcal B}}
\maketitle

\begin{abstract} We study the wave equation in the exterior of a bounded domain $K$ with dissipative boundary condition $\pa_{\nu} u - \gamma(x) \pa_t u = 0$ on the boundary $\Gamma$ and $\gamma(x) > 0.$ The solutions are described by a contraction semigroup $V(t) = e^{tG}, \: t \geq 0.$ The eigenvalues $\lambda_k$ of $G$ with $\Re \lambda_k < 0$ yield asymptotically disappearing solutions $u(t, x) = e^{\lambda_k t} f(x)$ having exponentially decreasing global energy. We establish a Weyl formula for these eigenvalues in the case $\min_{x\in \Gamma} \gamma(x) > 1.$ For strictly convex obstacles $K$ this formula concerns all eigenvalues of $G.$
\end{abstract} 

{\bf Keywords}: Dissipative boundary conditions, eigenvalues asymptotics \\

{\bf Mathematics Subject Classification 2020}:  35P20, 35P25, 47A40, 58J50

\section{Introduction}

Let $K \subset \R^d,$ $d \geq 2$, be a bounded non-empty domain. Let $\Omega = \R^d \setminus \bar{K}$ be connected. and $K \subset \{x \in \R^d:\:|x| \leq \rho_0\}.$ We suppose that the boundary $\Gamma$ of $K$ is $C^{\infty}.$ 
Consider the boundary problem
\begin{equation} \label{eq:1.1}
\begin{cases} u_{tt} - \Delta_x u = 0 \: {\rm in}\: \R_t^+ \times \Omega,\\
\partial_{\nu}u - \gamma(x) \pa_t u= 0 \: {\rm on} \: \R_t^+ \times \Gamma,\\
u(0, x) = f_1, \: u_t(0, x) = f_2 \end{cases}
\end{equation}
with initial data $(f_1, f_2) \in H^1(\Omega) \times L^2(\Omega) = {\mathcal H}.$
Here $\nu(x)$ is the unit outward normal to $\Gamma$ pointing into $\Omega$ and $\gamma(x)\geq 0$ is a $C^{\infty}$ function on $\Gamma.$ The solution of the problem (\ref{eq:1.1}) is given by $V(t)f = e^{tG} f,\: t \geq 0$, where $V(t)$ is a contraction semi-group in ${\mathcal H}$ whose  generator
$$ G = \Bigl(\begin{matrix} 0 & 1\\ \Delta & 0 \end{matrix} \Bigr)$$
has a domain $D(G)$ which is the closure in the graph norm
$$|\|f\| | = (\|f\|_{{\mathcal H}}^2 + \|G f\|^2_{{\mathcal H}})^{1/2} $$
 of functions $f = (f_1, f_2) \in C_{(0)}^{\infty} (\R^d) \times C_{(0)}^{\infty} (\R^d)$ satisfying the boundary condition $\partial_{\nu} f_1 - \gamma f_2 = 0$ on $\Gamma.$ It is well known  that the spectrum of $G$ in $\Re z < 0$ is formed by isolated eigenvalues with finite multiplicity (see \cite{LP1} for $d$ odd and \cite{P1} for all $d \geq 2$.) Moreover, $G$ has no eigenvalues on the imaginary axis $\ii \R.$ 
Notice that if $Gf =\lambda f$ with $0 \neq f \in D(G), \: \Re \lambda < 0$ and $\partial_{\nu} f_1 - \gamma f_2 =0$ on $\Gamma$, we get 
\begin{equation} \label{eq:1.2}
\begin{cases} (\Delta - \lambda^2) f_1 = 0 \:{\rm in}\: \Omega,\\
\partial_{\nu} f_1 -  \lambda \gamma f_1 = 0\: {\rm on}\: \Gamma \end{cases}
\end{equation}
and $u(t, x) = V(t) f = e^{\lambda t} f(x) $ is a solution of (\ref{eq:1.1}) with exponentially decreasing global energy. Such solutions are called {\bf asymptotically disappearing}. On the other hand, the solutions $u(t, x) = V(t) f$ for which there exists $T > 0$ such that $u(t, x) \equiv 0$ for $t \geq T$ are called {\bf disappearing} (see \cite{Ma1}). For $t_0 > 0$ the closed linear space 
$$H(t_0) = \{ g \in {\mathcal H} :  V(t) g = 0\: {\rm for} \: t \geq t_0\}$$
is invariant under the action of $V(t)$ and if $H(t_0) \neq \{0\},$ then $H(t_0)$ has infinite dimension.
If $H(t_0)$ is not trivial, the scattering system is non controllable (see section 4 in \cite{Ma1} for the definition and details). Majda proved in \cite{Ma1} that for obstacles with analytic boundary $\Gamma$ and analytic $\gamma(x)$  the condition $\gamma(x) \neq 1,
\: \forall x \in \Gamma,$ implies that there are no disappearing solutions.
        
         In this paper in the case $\min_{x \in \Gamma}  \gamma(x) > 1$ we show  that there exists a subspace ${\mathcal H}_{sp} \subsetneq {\mathcal H}$ with infinite dimension generated by eigenfunctions of $G$ such that $V(t) g, \: g \in {\mathcal H}_{sp}$ is  asymptotically disappearing. The eigenvalues $\lambda_k$ sufficiently close to $\R^-$ with $\Re \lambda_k \to - \infty$ present a particular interest for applications since they correspond to solutions decreasing sufficiently fast as $t \to + \infty$.  It is important to know that such eigenvalues exist and to have their asymptotic. It was proved in \cite{CPR} that if we have at least one eigenvalue $
\lambda$ of $G$ with $\re \lambda < 0$, then the wave operators $W_{\pm}$ are not complete, that is ${\text Ran}\: W_{-} \not=  {\text Ran}\: W_{+}$. Hence  we cannot define the scattering operator $S$ related to the Cauchy problem for the free wave equations and the boundary problem (1.1) by the product $ W_{+}^{-1}\circ W_{-}$. When the global energy is conserved in time and the unperturbed and perturbed  problems are associated to unitary groups, the corresponding scattering operator $S(z): L^2(\S^{d-1}) \to L^2(\S^{d-1})$ satisfies the identity
\begin{equation} \label{eq:1.3}
S^{-1}(z)= S^*(\bar{z}),\: z \in \C,
\end{equation} 
providing  $S(z)$ invertible at $z$.  Since $S(z)$ and $S^*(z)$ are analytic in the "physical" half plane $\{z \in \C:\im z < 0\}$ (see \cite{LP}) the above relation implies that $S(z)$ is invertible for $\im z > 0$. For dissipative boundary problems the relation (\ref{eq:1.3}) in general is not true and $S(z_0)$ may have a non trivial kernel for some $z_0, \im z_0 > 0.$ For odd dimensions $d$ Lax and Phillips \cite{LP1} proved that this implies that $\ii z_0$ is an eigenvalue of $G$. Thus the analysis of the eigenvalues of $G$ is important for the location and  the existence of points, where  the kernel of $S(z)$ is not trivial. A similar connection occurs in the analysis of the interior transmission eigenvalues (see \cite{CH} for the definition and more references). More precisely, consider the {\it far-filed operator}
$$(F(k) f) (\theta) = \int_{\S^{d-1} } a(k, \theta, \omega) f(\omega)d\omega,\: (\theta, \:\omega) \in \S^{d-1} \times \S^{d-1}.$$
Here $a(k, \theta, \omega)$ is the scattering amplitude for the Helmholtz equation $(\Delta + k^2 n(x))u = 0, \: x \in K$ with contrast function $n(x) > 0$ and for $d$ odd the scattering operator has the representation
$$S(k) = Id+ \Bigl(\frac{\ii k}{2\pi}\Bigr)^{(d-1)/2} F(k), \: k\in \R.$$ 
Therefore if the kernel of $F(k)$ is non trivial, $k$ is an interior transmission eigenvalue \cite{CH}.

The location in $\C$ of the eigenvalues of $G$ has been studied in \cite{P1} improving previous results of Majda \cite{Ma}. It was proved in \cite{P1} that for the case when $K$ is the unit ball $B_3 =\{x \in \R^3: \;|x|\leq 1\}$ and $\gamma \equiv 1,$ the operator $G$ has no eigenvalues. For this reason we study the cases 
$$(A): \max_{x \in \Gamma}  \gamma(x)  < 1,\:(B): \min_{x \in \Gamma} \gamma(x) > 1.$$ 
The results in \cite{P1} say that in the case (B) for every $0 < \ep \ll 1$ and every $M  \in \N, \: M \geq 1$ the eigenvalues lie in $\Lambda_{ \ep} \cup {\mathcal R}_{N}$, where 
$$\Lambda_{\ep} = \{z \in \C: \: |\Re z | \leq C_{\ep} (1 + |\Im z|^{1/2 + \ep} ), \: \Re z < 0\},$$
$${\mathcal R}_M = \{z \in \C:\: |\Im z | \leq A_M (1 + |\Re z|)^{-M} ,\: \Re z < 0\}.$$
Moreover, for strictly convex obstacles $K$ there exists $R_0 > 0$ such that the eigenvalues lie in ${\mathcal R}_M \cup \{|z| \leq R_0\}.$ In the case (A) the eigenvalues lie in $\Lambda_{\ep}.$ By using the results in \cite{V1}, it is possible to improve the eigenvalue free regions replacing $\Lambda_{\ep}$ by $\{z \in \C:\: -A_0 \leq \Re z < 0\}$ with sufficiently large $A_0 > 0.$

    The existence of eigenvalues has been proved (see Appendix in \cite{P1}) only for the ball $B_3$ and $\gamma \equiv const > 1$  and in this particular case we have 
    \begin{equation} \label{eq:1.4}
   \sigma_p(G) \subset (-\infty, - \frac{1}{\gamma - 1}].
    \end{equation} 
Moreover, we have infinite number of real eigenvalues and as $\gamma \searrow 1$ one gets a large strip $\{z \in \C: \: -\frac{1}{\gamma - 1} < \Re z < 0\}$ without eigenvalues. 

The purpose of this paper is to establish a Weyl formula for the eigenvalues in ${\mathcal R}_M\cap \{z \in \C:\:\Re z< - C_0 \leq -1\}$ in the case (B).  Introduce the set
$$\Lambda = \{ \lambda \in \C:\: |\Im \lambda| \leq C_1( 1 + |\Re \lambda|)^{-2}, \: \Re \lambda\leq -C_0 \leq -1\}$$
containing ${\mathcal R}_M, \: \forall M \geq 2,$ modulo a compact set and denote by $\sigma_p(G)$ the point spectrum of $G$. Increasing  the constant $C_0 > 0$ in the definition of $\Lambda$, we subtract a compact set and this is not important for the asymptotic (\ref{eq:1.5}) below. In the following we assume that $C_0 \geq 2C_1.$ Given $\lambda \in \sigma_p(G)$, we define the algebraic multiplicity of $\lambda$ by
$${\rm mult}\:(\lambda) = {\rm tr}\:\frac{1}{2 \pi \ii} \int_{|z - \lambda| =\ep} (z - G)^{-1} dz$$
with $ 0 < \ep \ll 1$ sufficiently small.
Our main result is the following
\begin{thm} Assume $\gamma(x) > 1$ for all $x \in \Gamma.$ 
Then the counting function of the eigenvalues in $\Lambda$ taken with their multiplicities has the asymptotic
\begin{eqnarray} \label{eq:1.5} 
\sharp \{ \lambda_j \in \sigma_p(G) \cap \Lambda:\: |\lambda_j | \leq r, \: r \geq C_{\gamma}\} \nonumber \\
=  \frac{\omega_{d-1}}{(2 \pi)^{d -1}}\Bigl( \int_{\Gamma} (\gamma^2(x) - 1)^{(d-1)/2} dS_x\Bigr) r^{d-1} + {\mathcal O}_{\gamma} (r^{d-2}),\: r \to \infty,
\end{eqnarray} 
$\omega_{d-1}$ being the volume of the unit ball $\{x \in \R^{d-1}:\: |x| \leq 1\}.$
\end{thm}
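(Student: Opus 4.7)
The plan is to reduce the eigenvalue problem for $G$ inside the strip $\Lambda$ to a semiclassical spectral problem for a pseudodifferential pencil on the closed $(d-1)$-manifold $\Gamma$ via the outgoing exterior Dirichlet-to-Neumann map, and then to symmetrize that pencil into a self-adjoint model whose counting function is furnished by the standard H\"ormander--Weyl asymptotic.

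\emph{Boundary reduction and semiclassical structure.} For $\re \lambda$ sufficiently negative, let $\mathcal{N}(\lambda): H^{1/2}(\Gamma) \to H^{-1/2}(\Gamma)$ denote the outgoing exterior Dirichlet-to-Neumann operator associated with $\Delta - \lambda^2$ on $\Omega$; it is holomorphic on $\Lambda$ once $C_0$ is large enough, since $\lambda^2$ then lies off the spectrum of the exterior Dirichlet realization. By (\ref{eq:1.2}), $\lambda \in \sigma_p(G)$ if and only if $g := f_1\vg$ belongs to the kernel of the operator-valued function
$$\mathcal{T}(\lambda) := \mathcal{N}(\lambda) - \lambda \gamma(x),$$
and the Gohberg--Sigal calculus of logarithmic residues identifies ${\rm mult}(\lambda)$ with the algebraic multiplicity of $\lambda$ as a characteristic value of $\mathcal{T}(\cdot)$. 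Setting $h = |\lambda|^{-1}$, a standard parametrix for the exterior Helmholtz problem shows that $h\mathcal{N}(\lambda)$ is a semiclassical pseudodifferential operator on $\Gamma$ of order one with principal symbol $-\sqrt{1 + r(x,\xi')}$, where $r(x,\xi')$ is the dual metric induced on $T^*\Gamma$ from $\R^d$. Consequently $h \mathcal{T}(\lambda)$ has principal symbol $q_0(x,\xi') = \gamma(x) - \sqrt{1 + r(x,\xi')}$, whose zero set $\{r = \gamma^2(x) - 1\}$ is a smooth compact hypersurface in $T^*\Gamma$, non-empty precisely because $\gamma > 1$.

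\emph{Symmetrization and Weyl asymptotic.} The companion symbol $-\sqrt{1+r} - \gamma(x)$ of the operator $\mathcal{N}(\lambda) + \lambda \gamma$ stays bounded away from zero, so this operator is semiclassically elliptic and boundedly invertible for $|\lambda|$ large. Therefore $\ker \mathcal{T}(\lambda) = \ker\bigl[(\mathcal{N}(\lambda) - \lambda \gamma)(\mathcal{N}(\lambda) + \lambda\gamma)\bigr]$. Expanding the product and using that $\mathcal{N}(\lambda)^2 - \lambda^2$ is a pseudodifferential operator on $\Gamma$ with the same principal symbol $r(x,\xi')$ as $-\Delta_\Gamma$, we find, modulo lower-order and commutator terms,
$$(\mathcal{N}(\lambda) - \lambda \gamma)(\mathcal{N}(\lambda) + \lambda \gamma) = -\Delta_\Gamma - \lambda^2 (\gamma^2(x) - 1) + \text{l.o.t.}$$
Thus, to leading order, the characteristic values of $\mathcal{T}$ are determined by the eigenvalues $\mu_k$ of the self-adjoint elliptic operator $\mathcal{L} := (\gamma^2 - 1)^{-1/2}(-\Delta_\Gamma)(\gamma^2 - 1)^{-1/2}$ on $L^2(\Gamma)$, via $\lambda_k^2 = \mu_k + O(1)$. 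The H\"ormander sharp Weyl formula for $\mathcal{L}$ on the compact $(d-1)$-manifold $\Gamma$ reads
\begin{equation*}
\sharp\{\mu_k \leq r^2\} = \frac{\omega_{d-1}}{(2\pi)^{d-1}} \Bigl( \int_{\Gamma} (\gamma^2(x) - 1)^{(d-1)/2} dS_x \Bigr) r^{d-1} + \mathcal{O}(r^{d-2}),
\end{equation*}
and combining this with $|\lambda_k| = \sqrt{\mu_k}\,(1 + o(1))$ produces (\ref{eq:1.5}).

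\emph{Main obstacle.} The key difficulty is to upgrade the above leading-order matching to a genuine multiplicity-preserving bijection, with $O(r^{d-2})$ error, between the characteristic values of the non-self-adjoint pencil $\mathcal{T}(\lambda)$ inside the very thin strip $\Lambda$ and the real spectrum of the self-adjoint model $\mathcal{L}$. This will require (i) a Grushin reduction of $\mathcal{T}(\lambda)$ in each spectral window $\{|\lambda|\sim s_k\}$, so that counting characteristic values of $\mathcal{T}$ amounts to counting zeros of a scalar Fredholm determinant whose logarithmic derivative can be evaluated via a semiclassical trace; (ii) a precise tracking of the subprincipal symbol of $\mathcal{N}(\lambda)$, which carries the second fundamental form of $\Gamma$, showing that lower-order corrections shift each $\lambda_k$ by at most $O(1)$ and therefore affect neither the Weyl leading term nor the $O(r^{d-2})$ remainder; and (iii) invoking the confinement results of \cite{P1} to guarantee that every eigenvalue of $G$ of sufficiently large modulus actually lies in $\Lambda$, so none is missed by the count. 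For strictly convex $K$, the stronger localization ${\mathcal R}_M \cup \{|z| \leq R_0\}$ from \cite{P1} implies that only finitely many eigenvalues can lie outside $\Lambda$, whence (\ref{eq:1.5}) governs the complete point spectrum of $G$.
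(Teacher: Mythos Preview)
Your boundary reduction and leading-order symbol computation are correct, and the Weyl coefficient you extract matches (\ref{eq:1.5}). But what you have written is a plan, not a proof: your ``Main obstacle'' paragraph is an accurate list of everything that remains to be done, and none of (i)--(iii) is carried out. The passage from ``$(\mathcal{N}(\lambda)-\lambda\gamma)(\mathcal{N}(\lambda)+\lambda\gamma)$ has the same principal symbol as $-\Delta_\Gamma-\lambda^2(\gamma^2-1)$'' to ``$\lambda_k^2=\mu_k+O(1)$ with multiplicities matching'' is exactly the content of the theorem. The lower-order terms you suppress depend on $\lambda$, so this is not a fixed perturbation of $\mathcal L$; a Grushin scheme would still need some monotonicity or transversality input to guarantee that each effective Hamiltonian has a simple zero, and you have not supplied it. Also, your item (iii) mis-states what is needed: the theorem counts only the eigenvalues in $\Lambda$, so for general (non-convex) $K$ one does \emph{not} require that all large eigenvalues lie there.

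The paper's route is different and avoids the symmetrization entirely. After the change $\lambda=-1/h$ one works with the \emph{self-adjoint} operator $P(h)=T(h,-1)-\gamma(x)$ (the parametrix approximation of $-h\,\mathcal N(-1/h)-\gamma$) for real $h\in(0,h_0]$, and tracks its eigenvalues $\mu_k(h)$ as functions of $h$. The crucial technical input is the lower bound (Proposition~2)
\[
h\,\frac{\partial P(h)}{\partial h}\;+\;C\,P(h)\,\langle hD\rangle^{-1/2}\,P(h)\;\ge\;\epsilon(1-C_2h)\,\langle hD\rangle,
\]
proved via sharp G\aa rding, which forces each $\mu_k(h)$ to be strictly increasing through the window $[-\delta,\delta]$; hence $\mu_k(h)=0$ has at most one root $h_k$. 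The number of such $h_k$ in $(r^{-1},h_0]$ equals, up to $O(1)$, the number of negative eigenvalues of the single self-adjoint operator $P(r^{-1})$, and the standard semiclassical Weyl law for $\{p_1(x,\xi)\le 0\}=\{r_0\le\gamma^2-1\}$ gives (\ref{eq:1.5}) directly. The bijection between the $h_k$ and the eigenvalues of $G$ in $\Lambda$ is established by comparing the trace formula (\ref{eq:2.10}) for $C(h)$ with the analogous one for $P(h)$ over small contours $\gamma_{k,p}\subset L$, using that $C(h)-P(h)$ is $O(h^m)$-smoothing for arbitrary $m$ (Sj\"ostrand--Vodev \cite{SV}). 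No product trick, no fixed model operator $\mathcal L$, and no Grushin reduction are used; the monotonicity estimate replaces all of your steps (i)--(ii).
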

The example concerning the ball $B_3$ and (\ref{eq:1.4})  show that the condition $r \geq C_{\gamma}$ is natural since the coefficient before $r^{d-1}$ in (\ref{eq:1.5}) goes to 0 as
$\max_{x \in \Gamma} \gamma(x) \searrow 1.$ Notice that for strictly convex obstacles $K$ in the case (B) we obtain a Weyl formula for {\it all eigenvalues} of $G$.
For Maxwell's equations with dissipative boundary conditions in the particular case $K = B_3, \gamma \equiv const \neq 1,$ the formula (\ref{eq:1.5}) has been obtained in \cite{CP}. Weyl formula for the transmission eigenvalues have been obtained by several authors. We refer to \cite{PV} and \cite{NN} for more references. It is important to note that in \cite{PV} the Weyl formula is established with remainder which depends on the eigenvalue free region. In  \cite{NN} the relation with the eigenvalues free regions is not exploited and the argument is based on a Tauberian theorem which yields a weak remainder. In the present paper we apply the eigenvalue free results in \cite{P1} and the remainder in (\ref{eq:1.5}) is optimal. 

To prove Theorem 1, we apply the approach of \cite{SV} and the construction of a semi-classical parametrix $T(h, z), \: 0 < h \leq h_0, \: z =  -\frac{1}{( 1 + \ii \eta)^2}, \: |\eta| \leq h^2$ for the semi-classical exterior Dirichlet-to-Neumann map $N(h, z)$ given in \cite{V}, \cite{P1}. For $ z = - 1$ the operator $P(h): = T(h, -1) - \gamma(x)$ is self-adjoint and we denote by $\mu_1(h) \leq \mu_2(h) \leq ...$ its eigenvalues counted with their multiplicities. The points $0 < h_k \leq h_0$ for which $\mu_k(h_k) = 0 $ correspond to points $h$ for which $P(h)$ is not invertible. For large fixed $k_0$,  depending on $h_0$, the eigenvalues $\mu_k(h_0)$ are positive, whenever $k > k_0$. Thus if $\mu_k(r^{-1} ) < 0, \: k > k_0,$  we have $\mu_k(h_k) = 0$ for some $r^{-1} < h_k < h_0$ and by a more fine analysis we prove that such a $h_k$ is unique. The operator $P(h)$ can be extended as holomorphic one for complex $\th = h(1 + \ii\eta) \in L$ with $|\eta| \leq h^2$ and $L$ defined in (\ref{eq:2.11}). For the resolvent $(\lambda- G)^{-1}$ a trace formula has been established in \cite{P1} (see Proposition 1). Similarly, a trace formula involving $P^{-1} (\th)$ and  the derivative $\dot{P}(\th)$ can be proved. These two trace formulas differs by negligible terms and this leads to a map between the points $h_k \in L$, where $P(h_k)$ is not invertible and the eigenvalues of $G$. To obtain (\ref{eq:1.5}), one  counts the number of the negative eigenvalues of $P(r^{-1}), \: r \geq C_{\gamma}$ which is given by well known formula. 

    The analysis of the  counting function of the eigenvalues of $G$ lying in a strip $\{z \in \C:\: -A_0 \leq \Re z \leq 0\},\: A_0 > 0,$ as well as the study of the case $(A)$ are open problems. There is a conjecture that there exists a sequence of eigenvalues $\lambda_k,\: |\Im \lambda_k| \to \infty.$  
 For the investigation of these problems it seems  convenient  to use the semi-classical parametrix $T(h, z)$ for the exterior Dirichlet-to-Neumann problem constructed in \cite{S} for strictly  convex obstacles in the hyperbolic region $\{z \in \C: \: z = 1 + \ii h w\},\: |w| \leq B_0. $
    
The paper is organised as follows.  In Section 2 we collect some facts concerning the operator $\cc(\lambda) = \nc(\lambda) - \lambda \gamma$ for $\Re\lambda < 0$, where $\nc(\lambda)$ is exterior Dirichlet-to-Neumann map defined in the beginning of Section 2. We recall a the trace formula involving the resolvent $(G - \lambda)^{-1} $ established in \cite{P1}.  In Section 3 one presents some information for the semi-classical parametrix for $N(h, z)$ and  $z \in Z_{e} = \{ z\in \C: \: z= - \frac{1}{(1 + \ii \eta)^2}  \},\: |\eta| \leq h^2$ based on the construction in \cite{V}, \cite{V2}. The properties of the operator $P(h)$ for $h$ real are treated in Section 4. In Section 5 we compare the trace formulas for $\cc(\lambda)$ and for $P(\th)$ and we prove Theorem 1. Finally, in Section 6 we discuss some generalisations and a dissipative boundary problem for Maxwell's equations.

\section{Preliminaries}

We start with  some facts which are necessary for our exposition (see  \cite{P1}). For $\re \lambda < 0$ introduce the exterior Dirichlet-to-Neumann map
$$\nc(\lambda): H^s(\Gamma) \ni f \longrightarrow \pa_{\nu} u\vert_{\Gamma} \in H^{s-1}(\Gamma),$$
where $u$ is the solution of the problem
\begin{equation} \label{eq:2.1}
\begin{cases} (-\Delta +\lambda^2) u = 0 \: {\rm in}\: \Omega,\: u \in H^2(\Omega),\\
u = f \:{\rm on}\: \Gamma,\\
u :(\ii \lambda)-{\rm outgoing}.
 \end{cases}
\end{equation}
A function $u(x)$  is $(\ii \lambda)$-outgoing if there exists $R > \rho_0$ and $g \in L^2_{comp}(\R^d)$ such that
$$u(x) = (-\Delta_0 + \lambda^2)^{-1} g,\: |x| \geq R,$$
where $R_0(\lambda) =(-\Delta_0 +\lambda^2)^{-1}$ is the outgoing resolvent of the free Laplacian $- \Delta_0$ in $\R^d$ which is analytic in $\C$ for $d$ odd and on the logarithmic covering of $\C$ for $d$ even. The resolvent $R_0(\lambda)$ has kernel
 \begin{equation}  \label{eq:2.2}  R_0(\lambda, x- y) = -\frac{\ii}{4} \Bigl(\frac{-\ii \lambda}{2 \pi |x-y|}\Bigr)^{(n-2)/2}\Bigl( H_{\frac{n-2}{2}}^{(1)}(u)\Bigr) \Bigl\vert_{u = -\ii \lambda |x-y|},
\end{equation} 
 $H_{\nu}^{(1)}(z)$ being the Hankel function of first kind and 
we have the asymptotic 
\begin{equation} \label{eq:2.3}
H^{(1)}_{\nu}(z) = \Bigl(\frac{2}{\pi r}\Bigr)^{1/2} e^{\ii (z - \frac{\nu \pi}{2} - \frac{\pi}{4})} + {\mathcal O}(r^{-3/2}), \: - \pi < {\rm arg}\: z < 2\pi,|z| =r  \to +\infty.
\end{equation} 

The solution of the problem
(\ref{eq:2.1}) with $f \in H^{3/2}(\Gamma)$ has the representation
$$u = e(f) + (-\Delta_D +\lambda^2)^{-1}((\Delta -\lambda^2) (e(f)),$$ 
where $e(f): H^{3/2}(\Gamma) \ni f \to e(f) \in H^{2}_{comp}(\Omega)$ is an extension operator  and $R_D(\lambda) = (-\Delta_D +\lambda^2)^{-1}$ is the outgoing resolvent of the Dirichlet Laplacian $\Delta_D$ in $\Omega$. The cut-off resolvent $R_{\chi} (\lambda)  = \chi(x) R_D(\lambda) \chi(x) $ with $\chi(x) \in C_0^{\infty} (\R^d)$ equal to 1 in a neighbourhood of $K \cup \:{\rm supp}\: \: e(f) $   is analytic for $ \re \lambda < 0$ and meromorphic in $\C$ for $d$ odd and on the logarithmic covering of $\C$ for $d$ even. Consequently, $\nc(\lambda): H^{3/2} (\Gamma) \rightarrow H^{1/2} (\Gamma)$  is  a meromorphic operator-valued function with the same poles as $R_{\chi}(\lambda) .$ The same result holds for the action of $\nc(\lambda)$ on other Sobolev spaces.
Consider the set $\Lambda \subset \{z \in \C:\Re z < -C_0 \leq -1\}$ introduced in Section 1. 
By using the estimates for $R_{\chi}(\lambda)$ for $\Re \lambda< - C_0,$ we obtain 
\begin{equation}\label{eq:2.4} 
\|\nc(\lambda)\|_{H^{1/2}(\Gamma) \to H^{-1/2}(\Gamma)} \leq A_0|\lambda|^2,\: \lambda \in \Lambda.
\end{equation} 

Applying Green's representation for the solution $u(y)$ of (\ref{eq:2.1}) and taking the limit 
$$\Omega \ni y_n \to x \in \Gamma,$$
 we have
$$ (C_{00}(\lambda) f)(x) - (C_{01}(\lambda) \nc(\lambda) f)(x) = \frac{f(x)}{2},\: x \in \Gamma$$
where 
$$(C_{00}(\lambda) f)(x)= \int_{\Gamma} f(y) \frac{\pa}{\pa \nu(y)} R_0(\lambda, x- y)dS_y, $$
$$(C_{01}(\lambda)g)(x) = \int_{\Gamma} g(y)  R_0(\lambda, x- y)dS_y $$
are the Calder\'on operators or double and single layer potentials which have the same analytic properties as $R_0(\lambda, x- y).$
 Melrose showed (\cite{M}, Section 3) that there exists an entire family $P_D(\lambda)$ of compact pseudo-differential operators of order -1 on $\Gamma$ such that
$$-2(-\Delta_{\Gamma} + 1)^{1/2} C_{01}(\lambda) = Id + P_D(\lambda),$$
$\Delta_{\Gamma}$ being the Laplace Beltrami operator on $\Gamma$ equipped with the Riemannian metric induced by the Euclidean one in $\R^d.$ In fact, $-C_{01}(\lambda) $ is a pseudo-differential operator of order -1 with principal symbol $\frac{1}{2}(-\Delta_{\Gamma})^{-1/2}$ (see \cite{M}) and one takes the composition of the operators $\sqrt{-\Delta_{\Gamma} + 1}$ and  $(-\Delta_{\Gamma})^{-1/2}.$ Consequently,  $(Id + P_D(\lambda))^{-1} $ is a meromorphic operator-valued function and   for $\Re \lambda < 0$ one deduces
\begin{equation}\label{eq:2.4bis}
\nc(\lambda) = (Id + P_D(\lambda))^{-1} (-\Delta_{\Gamma} + 1)^{1/2} (Id - 2C_{00}(\lambda)).
\end{equation} 
Since $\nc(\lambda)$ is analytic  for $\Re \lambda < 0$,- 1 is not an eigenvalue of $P_D(\lambda) $ for $\Re \lambda< 0.$ 
On the other hand, $C_{00}(\lambda)$ is a pseudo-differential operator of order -1, hence it is compact one.  
 The Neumann problem 
\begin{equation} \label{eq:2.5}
\begin{cases} (-\Delta + \lambda^2) u = 0 \: {\rm in}\: \Omega,\: u \in H^2(\Omega),\\
\pa_{\nu}u = 0 \:{\rm on}\: \Gamma,\\
u: (\ii \lambda)-{\rm outgoing}. \end{cases}
\end{equation}
 has a non-trivial solution  if the operator $2C_{00}(\lambda)$ has  eigenvalue 1  and this occurs only if $\lambda$ coincides with a resonance $\nu_j, \re \nu_j > 0,$ of the Neumann problem (see \cite{LP}). By Fredholm theorem one deduces that
$$\nc (\lambda)^{-1}  = ( Id - 2C_{00}(\lambda))^{-1} (-\Delta_{\Gamma} + 1)^{-1/2} (Id + P_D(\lambda) ): H^s(\Gamma) \rightarrow H^{s+ 1}(\Gamma)$$
 is meromorphic with poles $\nu_j.$
 
Going back to the problem (\ref{eq:1.2}), for $\Re \lambda < 0$ we write the boundary condition  as follows
$$\cc(\lambda)v: = (\nc(\lambda) - \lambda \gamma) v  = \nc(\lambda) \Bigl( Id - \lambda \nc(\lambda)^{-1}  \gamma \Bigr) v  = 0,\:v=  f_1\in H^{1/2}(\Gamma).$$
Clearly, for $\Re \lambda < 0$ the operator $\cc(\lambda)$ has the same singularities as $\nc(\lambda)$, hence $\cc(\lambda): H^{1/2}(\Gamma) \rightarrow H^{-1/2}(\Gamma)$ is analytic and satisfies the estimate (\ref{eq:2.4}) with another constant $A_0.$ The operator $\nc(\lambda)^{-1}$ is compact and by the results in \cite{P1}  there are points $\lambda_0,\: \re \lambda_0 < 0,$ for which  $Id - \lambda_0 \nc(\lambda_0)^{-1} \gamma$ is invertible. Applying the analytic Fredholm theorem for the operator $ \Bigl( Id - \lambda \nc(\lambda)^{-1}  \gamma \Bigr)$  in the half plane$\Re \lambda < 0$, one concludes that 
\begin{equation} \label{eq:2.6} 
\cc(\lambda)^{-1}  = \Bigl(Id - \lambda \nc (\lambda)^{-1}  \gamma\Bigr)^{-1} \nc (\lambda)^{-1}: H^{-1/2}(\Gamma) \rightarrow H^{1/2}(\Gamma) 
\end{equation} 
 is a meromorphic operator-valued function. Notice that for $\lambda \in \R^{-}$ the operators $\nc(\lambda), \cc(\lambda) $ are self-adjoint. This follows from the Green formula for  $(-\Delta + \lambda^2)$.\\
 \begin{rem} It is important to note that the analyticity of the resolvent $(- \Delta_D + \lambda^2)^{-1} $ for $\Re \lambda < 0$ and the absence of resonances of the Neumann problem in the half plan $\{z \in \C: \Re z < 0\}$ imply that $\cc(\lambda)^{-1} $ is meromorphic for $\Re \lambda < 0$ and $(\ref{eq:2.4bis})$ is not necessary for the proof of this statement.

 \end{rem} 
  
 For the resolvent $(\lambda- G)^{-1}$ in \cite{P1} the following trace formula has been proved.
 
\begin{prop} Let $\delta \subset \{ \lambda \in \C: \: \re 
\lambda < 0\}$ be a closed positively oriented curve without self intersections. Assume that  $\cc(\lambda)^{-1} $ has no poles on $\delta$ . Then
\begin{equation} \label{eq:2.7}
 {\rm tr}_{{\mathcal H}} \: \frac{1}{2 \pi i} \int_{\delta} (\lambda - G)^{-1} d\lambda = {\rm tr}_{H^{1/2}(\Gamma)} \:\frac{1}{2 \pi i} \int_{\delta} \cc(\lambda)^{-1}  \frac{\pa \cc}{\pa \lambda}(\lambda) d \lambda.
\end{equation}
\end{prop}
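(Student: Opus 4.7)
The plan is to reduce the resolvent $(\lambda-G)^{-1}$ to a boundary operator involving $\cc(\lambda)^{-1}$, and then invoke the Gohberg--Sigal generalized argument principle to identify both sides with the sum of algebraic multiplicities of the eigenvalues of $G$ enclosed by $\delta$.

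\textbf{Step 1: boundary reduction of the resolvent.} For $\re\lambda<0$ and $(g_1,g_2)\in{\mathcal H}$, the equation $(\lambda-G)(f_1,f_2)=(g_1,g_2)$ is equivalent to $f_2=\lambda f_1-g_1$ together with the exterior problem
\begin{equation*}
(\lambda^2-\Delta)f_1 = g_2+\lambda g_1 \text{ in }\Omega,\quad (\pa_\nu -\lambda\gamma)f_1\vg = -\gamma g_1\vg,
\end{equation*}
with $f_1$ being $(\ii\lambda)$-outgoing. I would split $f_1 = R_D(\lambda)(g_2+\lambda g_1) + \mathcal{E}(\lambda)v$, where $R_D(\lambda)$ is the exterior Dirichlet resolvent (analytic for $\re\lambda<0$) and $\mathcal{E}(\lambda)$ is the Poisson extension sending boundary data to the $(\ii\lambda)$-outgoing Helmholtz solution. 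Imposing the boundary condition forces $v = \cc(\lambda)^{-1}\Phi(\lambda)(g_1,g_2)$, where $\Phi(\lambda)$ is analytic in $\lambda$, and yields a decomposition
\begin{equation*}
(\lambda-G)^{-1} = A(\lambda) + B(\lambda)\,\cc(\lambda)^{-1}\,D(\lambda)
\end{equation*}
in which $A(\lambda),B(\lambda),D(\lambda)$ are analytic operator families on $\{\re\lambda<0\}$.

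\textbf{Step 2: contour integration and cyclicity.} Since $A(\lambda)$ is analytic inside $\delta$ and $\cc(\lambda)^{-1}$ has no poles on $\delta$, the analytic piece contributes $0$ to the trace integral. Cyclicity of the trace gives
\begin{equation*}
\tr_{\mathcal H}\frac{1}{2\pi\ii}\int_\delta (\lambda-G)^{-1}\,d\lambda = \tr_{H^{1/2}(\Gamma)}\frac{1}{2\pi\ii}\int_\delta \cc(\lambda)^{-1}\bigl(D(\lambda)B(\lambda)\bigr)\,d\lambda.
\end{equation*}

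\textbf{Step 3: identification with $\cc^{-1}\pa_\lambda\cc$ via Gohberg--Sigal.} The bijection between eigenvalues of $G$ and characteristic values of $\cc(\lambda)$ is immediate from \eqref{eq:1.2}: $Gf=\lambda_0 f$ with $0\neq f\in D(G)$ corresponds via $f_2=\lambda_0 f_1$, $v=f_1\vg$ to $v\in\ker\cc(\lambda_0)$, and the exterior Dirichlet problem inverts this correspondence. The generalized argument principle of Gohberg--Sigal applied to the analytic Fredholm family $\cc(\lambda)$ (recall $\cc(\lambda)^{-1}$ is meromorphic by \eqref{eq:2.6} and $\cc$ itself has no poles in $\re\lambda<0$) gives
\begin{equation*}
\tr_{H^{1/2}(\Gamma)}\frac{1}{2\pi\ii}\int_\delta \cc(\lambda)^{-1}\pa_\lambda\cc(\lambda)\,d\lambda = \sum_{\lambda_j\in\,\In(\delta)}m(\lambda_j),
\end{equation*}
where $m(\lambda_j)$ is the total multiplicity of $\lambda_j$ as a characteristic value of $\cc$. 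On the other hand, the LHS of \eqref{eq:2.7} equals $\sum_{\lambda_j\in\In(\delta)}\mult(\lambda_j)$ by definition of the algebraic multiplicity via the Riesz projector. It remains to match $m(\lambda_j)=\mult(\lambda_j)$ for each enclosed eigenvalue.

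\textbf{Main obstacle.} The equality of multiplicities is the nontrivial point. One approach is to read off the Laurent expansion of $\cc(\lambda)^{-1}$ at $\lambda_0$ directly from the decomposition in Step 1: the Riesz projector of $G$ at $\lambda_0$ is $B(\lambda_0)\Pi_0 D(\lambda_0)$ (modulo analytic corrections), where $\Pi_0$ is the residue of $\cc(\lambda)^{-1}$, and one checks that $B(\lambda_0)$ and $D(\lambda_0)$ implement linear isomorphisms between the Jordan chains of $\cc(\lambda)$ at $\lambda_0$ (the root functions, in Gohberg--Sigal's terminology) and the generalized eigenvectors of $G$ at $\lambda_0$, respecting partial multiplicities. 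Equivalently, one proves the identity
\begin{equation*}
D(\lambda)B(\lambda) - \pa_\lambda\cc(\lambda) \in \cc(\lambda)\cdot\mathcal{A}_\lambda + \mathcal{A}_\lambda\cdot\cc(\lambda)
\end{equation*}
with $\mathcal{A}_\lambda$ analytic, so that the discrepancy between the two trace integrals is the integral of an analytic operator-valued function, hence zero. Either route closes the proof of \eqref{eq:2.7}.
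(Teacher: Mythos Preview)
The paper does not prove Proposition~1; it quotes the result from \cite{P1}. So there is no ``paper's own proof'' to compare against here, only the argument in the cited reference. That said, your Steps~1--2 are exactly the standard route (and the one taken in \cite{P1}): one writes $(\lambda-G)^{-1}=A(\lambda)+B(\lambda)\cc(\lambda)^{-1}D(\lambda)$ with the explicit $A,B,D$ you describe, drops $A$ under the contour integral, and moves $B$ past $D$ by cyclicity (justified because the principal part of $\cc(\lambda)^{-1}$ at each pole is finite rank, by analytic Fredholm theory).

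Where you go astray is in Step~3. The detour through Gohberg--Sigal and the multiplicity matching $m(\lambda_j)=\mult(\lambda_j)$ is not needed, because with your explicit choices of $B$ and $D$ one has the \emph{exact identity}
\[
D(\lambda)B(\lambda)=\pa_\lambda\cc(\lambda).
\]
Indeed, $B(\lambda)v=(\mathcal{E}(\lambda)v,\lambda\mathcal{E}(\lambda)v)$ and $D(\lambda)(g_1,g_2)=-\gamma g_1\vg-\pa_\nu R_D(\lambda)(g_2+\lambda g_1)\vg$, so
\[
D(\lambda)B(\lambda)v=-\gamma v-2\lambda\,\pa_\nu R_D(\lambda)\mathcal{E}(\lambda)v\vg.
\]
On the other hand, differentiating $(-\Delta+\lambda^2)\mathcal{E}(\lambda)v=0$ with $\mathcal{E}(\lambda)v\vg=v$ gives $\pa_\lambda\mathcal{E}(\lambda)v=-2\lambda R_D(\lambda)\mathcal{E}(\lambda)v$, whence $\pa_\lambda\nc(\lambda)v=-2\lambda\,\pa_\nu R_D(\lambda)\mathcal{E}(\lambda)v\vg$ and therefore $\pa_\lambda\cc(\lambda)v=-\gamma v-2\lambda\,\pa_\nu R_D(\lambda)\mathcal{E}(\lambda)v\vg$. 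So your ``second route'' in the Main Obstacle holds with $\mathcal{A}_\lambda\equiv0$: there is no discrepancy to absorb, and Step~2 already yields \eqref{eq:2.7} directly. The Jordan-chain correspondence you sketch as the first route is correct in principle but considerably more work, and it is precisely what the identity above lets you avoid.
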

Since $G$ has only point spectrum in $\re \lambda < 0$, the left hand term in (\ref{eq:2.7}) is equal to the number of the eigenvalues of $G$ in the domain $\omega$ bounded by $\delta$ counted with their algebraic multiplicities. Setting $\tilde{\cc}(\lambda) =  \frac{ \nc(\lambda)}{\lambda} - \gamma,$  we write the right hand side of (\ref{eq:2.7})
 as
\begin{equation} \label{eq:2.8} 
{\rm tr} \frac{1}{2 \pi i} \int_{\delta} \tilde{\cc}(\lambda)^{-1}  \frac{\pa \tilde{\cc}}{\pa \lambda}(\lambda) d \lambda.
\end{equation}

Set $\lambda= -\frac{1}{\th}, \:0 <  \Re \th \ll 1 $ and consider the problem
\begin{equation} \label{eq:2.9}
\begin{cases} (-\th^2\Delta +1) u = 0 \:{\rm in}\: \Omega,\\
-\th \partial_{\nu}u -   \gamma u = 0\: {\rm on}\: \Gamma,\\
u -{\rm outgoing}. \end{cases} 
\end{equation}

We introduce the operator $C(\th) : = -\th \nc(-\th^{-1} ) - \gamma$ and using (\ref{eq:2.8}),  the trace formula (\ref{eq:2.7}) becomes
\begin{equation} \label{eq:2.10}
 {\rm tr}\: \frac{1}{2 \pi i} \int_{\delta} (\lambda - G)^{-1} d\lambda = {\rm tr} \frac{1}{2 \pi i} \int_{\tilde{\delta}} C(\th)^{-1}  \dot{C}(\th) d \th,
\end{equation} 
where $\dot{C}$ denote the derivative with respect to $\th$ and $\tilde{\delta}$ is the curve $\tilde{\delta} = \{z \in \C:  z = -\frac{1}{w}, \: w  \in \delta\}.$\\

Obviously, for $ \lambda \in \Lambda$  one has $|\Im \lambda| \leq 1$ and this implies  $\th \in L$, where
\begin{equation}\label{eq:2.11}
 L : = \{ \th \in \C:\: |\Im \th| \leq C_1 |\th|^{4},\: |\th| \leq C_0^{-1}, \: \Re \th > 0\}.
\end{equation}
We write the points in $L$ as $\th= h( 1 + \ii \eta)$ with $0 < h \leq h_0 \leq C_0^{-1},\: \eta \in \R$. Recall that $\frac{2C_1}{C_0}\leq  1.$ Then $\frac{C_1}{C_0^3} \leq 1/2$ and for $\th \in L$ we get 
$$|\eta|  \leq \frac{1}{2} \sqrt{1 + \eta^2},$$
hence $\eta^2 \leq 1/3.$ This  implies 
$$|\eta| \leq  C_1 h (1 + \eta^2)^2 h^2 \leq h^2,\: h(1 + \ii \eta ) \in L,$$
since $\frac{16 C_1 h}{9}  \leq 1.$
Therefore the problem (\ref{eq:2.9}) becomes
\begin{equation} \label{eq:2.12}
\begin{cases} (-h^2\Delta-z) u = 0 \:{\rm in}\: \Omega,\\
-(1 + \ii \eta)h \partial_{\nu}u -   \gamma u = 0\: {\rm on}\: \Gamma,\\
u -{\rm outgoing}. \end{cases} 
\end{equation}
with $ z = - \frac{1}{(1 + \ii \eta)^2 }=  - 1 + s(\eta), \: |s(\eta)|\leq  (2 + h^2) h^2 \leq3 h^2$.
On the other hand,
$$C(\th) = -(1+ \ii\eta) h \nc(- \th^{-1}) - \gamma(x).$$

\section{Parametrix for $N(h, z)$ in the elliptic region}

In our exposition we will use $h$-pseudo-differential operators
 and we refer to \cite{DS} for more details.
 Let $X$ be a $C^{\infty}$ smooth compact manifold without
boundary with dimension $d-1\geq 1$. Let $(x, \xi)$ be the coordinates in $T^*(X)$ and let $a(x, \xi; h) \in C^{\infty}(T^*(X) \times (0, h_0]).$ 
 Given $\ell, m \in \R$, one denotes by $S^{\ell, m}$ the set of symbols so that 
$$
 |\pa_{x}^{\alpha} \pa_{\xi}^{\beta} a(x, \xi; h)| \leq C_{\alpha, \beta} h^{-\ell} (1 + |\xi|)^{m - |\beta|},\: \forall \alpha, \forall\beta,\quad
  (x, \xi) \in T^*(X).
 $$
If $\ell = 0,$ we denote $S^{\ell, m}$ by $S^{m}.$ 
The $h-$pseudo-differential operator with symbol $a(x, \xi; h)$ is defined by 
$$
(Op_h(a) f)(x) :\:=\
 (2 \pi h)^{-d + 1}\int_{T^*X} e^{\ii \langle x - y, \xi \rangle /h} a(x, \xi; h) f(y) dy d \xi.$$
We define the space of symbols $S_{cl}^{\ell, m}$ which have an asymptotic expansion
$$a(x, \eta; h) \sim \sum_{j = 0}^{\infty} h^{j- \ell} a_j(x, \eta), \: a_j \in S^{m - j}$$ 
and the corresponding classical pseudo-differential operator is given by 
$$(Op(a) f)(x) :\:=\
 (2 \pi)^{-d + 1}\int_{T^*X} e^{\ii \langle x - y, \eta\rangle } a(x, \eta; h) f(y) dy d \eta.
$$
It is clear that by a change of variable $\xi = h \eta$ we may write a $h-$ pseudo-differential operator as a classical one with parameter $h$. We will use this fact in Section 4.
The operators with symbols in $S^{\ell, m}, S_{cl}^{\ell, m}$ are denoted by $L^{\ell, m}, L_{cl}^{\ell, m}$, respectively. The wave front $\widetilde{WF} (A) \subset \widetilde{T^*(\Gamma)} $ of an operator $A \in L^{\ell, m}$ is defined as in \cite{SV}, where $\widetilde{T^*(\Gamma)}$ is the compactification of $T^*(\Gamma).$\\

   We will recall some results for the {\it exterior} semi-classical Dirichlet-to-Neumann map (see \cite{S}, \cite{V}, \cite{P1}). 
   Consider the operator 
$${\mathcal P}(h, z)u = (-h^2 \Delta_x-  z)u, \: z = - 1 + s(\eta).$$ 
In local normal geodesic coordinates $(y_1, y'), y_1 = {\rm dist}\:(y, \Gamma)$ in a neighbourhood ${\mathcal U}$ of $x_0 \in \Gamma$  the operator ${\mathcal P}$ has the form (see \cite{R})
$${\mathcal P}(h,  z) = h^2D_{y_1}^2 + r(y, hD_{y'}) + h^2 q(x)D_{y_1}  - z,\: D_j = -\ii \pa_{y_j}$$
 with $  r(y, \eta') = \la R(y)\eta', \eta'\ra, \:q(y) \in C^{\infty} $.
Here 
$$R(y) = \Bigl\{\sum_{k=1}^d \frac{\pa y_m}{\pa x_k} \frac{\pa y_j}{\pa x_k}\Bigr\}_{ m, j= 2}^d = \Bigl\{\Bigl\la \frac{\pa y_m}{\pa x}, \frac{\pa y_j}{\pa x} \Bigr\ra\Bigr \}_{m, j =2}^d$$
is a symmetric $((d- 1) \times (d-1))$ matrix  and $r(0, y', \eta') = r_0(y', \eta')$, where $r_0(y', \eta')$ is the principal symbol of the Laplace-Beltrami operator $-\Delta_{\Gamma}$ on $\Gamma$ equipped with the Riemannian metric induced by the Euclidean one in $\R^d.$ 
For $ z = -1 + s(\eta)$ introduce 
$\rho(y', \eta',z) =  \sqrt{ z - r_0(y', \eta')} \in C^{\infty}(T^* \Gamma)$
 as the root of the equation 
$$\rho^2 + r_0(y', \eta') -  z = 0$$
  with $ \im \rho(y', \eta', z) > 0.$ 
We have $\rho \in S^1 $ and
 $$\sqrt{- 1 + s(\eta) - r_0} = \ii\sqrt{1 + r_0} - \frac{ s(\eta)}{\sqrt{1 - s(\eta) +r_0} + \ii \sqrt{1+ r_0}}$$ 
 which implies $\rho - \ii\sqrt{1 + r_0} \in S^{-1}.$

 Let  $u$ be the solution of the Dirichlet problem
\begin{equation} \label{eq:3.1}
\begin{cases}  (-h^2 \Delta -  z) u = 0 \: {\rm in}\: \Omega,\\
u = f \: {\rm on} \: \Gamma,\\
 u -{\rm outgoing}. \end{cases} \end{equation}

Consider the semi-classical Sobolev spaces $H_h^k(\Gamma)$ with norm $\|(1 - h^2 \Delta)^{s/2} u\|_{L^2(\Gamma)}$ and
introduce the exterior semi-classical Dirichlet-to-Neumann map
$$N(h, z): H_h^{s}(\Gamma) \ni f \longrightarrow  -  h \pa_{\nu}u\vert_{\Gamma} \in H^{s-1}_h(\Gamma).$$

G. Vodev \cite{V} established for bounded domains $K \subset \R^d, \: d \geq 2,$ with $C^{\infty}$
boundary and solutions $u$ of the Helmoltz equation  $ (- h^2 \Delta  - z) u = 0, \: x \in K,$ an approximation of the interior Dirichlet-to-Neumann map. With some modifications his results can be applied for the exterior Dirichlet-to-Neumann map $N(h, z)$ (see \cite{P1}). We need some information for the parametrix build in \cite{V}, \cite{V2}  in the elliptic region $Z_e: = \{ z \in \C:\: z = -1 + s(\eta)\}.$\\

  For the reader convenience we recall some points of the construction in \cite{V}, \cite{V2}  for $ z \in Z_e.$ Let $\psi \in C_0^{\infty} (U_0),\: \psi = 1$ in a neighbourhood $U_0$ of $x_0 \in \Gamma.$ Denote the local normal geodesic coordinates by $(x_1, x')$ and the dual variables by $(\xi_1, \xi').$ We search a parametrix $u_{\psi}$ of the problem (\ref{eq:3.1}) with boundary data $\psi f$ in the form
$$\tilde{u}_{\psi}(x) = (2 \pi h)^{-d+ 1} \iint e^{\frac{i}{h} ( \varphi(x, \xi', z)+\la y', \xi' \ra)} \phi^2( \frac{x_1}{\delta}) a(x, \xi', h, z) f(y') d\xi'dy'.$$
Here $ 0 < \delta \ll 1$  and $\phi(t) \in C_0^{\infty}(\R)$ is equal to 1 for $|t| \leq 1$ and to 0 for $|t| \geq 2$. We write 
$$R(x) = \sum_{k=0}^{N-1} x_1^k R_k(x') + x_1^N {\mathcal R}_N(x),\: q(x) = \sum_{k=0}^{N-1} x_1 q_k(x') + x_1^N {\mathcal Q}_N (x). $$
For $\varphi$ the eikonal equation modulo $ x_1^N$ becomes
$(\pa_{x_1} \varphi)^2 + \la R(x) \pa_{x'} \varphi, \pa_{x'} \varphi \ra -  z = x_1^N \Phi_N$
and one obtains a smooth solution having the form
$$\varphi = \sum_{k= 0}^N x_1^k \varphi_k (x', \xi', z) ,\: \varphi_0 = - \la x', \xi'\ra,\: \pa_{x_1} \varphi\vert_{x_1 = 0} = \varphi_1 = \rho.$$
The functions $\varphi_k$ satisfy for $ 0 \leq K \leq N-2$ the equalities
\begin{equation} \label{eq:3.2} 
\sum_{k + j = K}(k+ 1) (j + 1) \varphi_{k+ 1} \varphi_{j + 1} + \sum_{k+ j + \ell =K} \la R_{\ell} \nabla _{x'} \varphi_k, \nabla_{x'} \varphi_j \ra-   z= 0.
\end{equation}
Clearly, we can determine $\varphi_{K+1}$ from the above equality since $\rho \neq 0.$  For $ z = - 1$ we have $\rho = \ii \sqrt{1 + r_0}$ and by recurrence one deduces $\varphi_k = \ii \tilde{\varphi}_k$ with real-valued function $\tilde{\varphi}_k$. Thus for $ z = -1$ we have $\varphi = - \la x', \xi'\ra + \ii \tilde{\varphi}$ with real-valued function $\tilde{\varphi}.$ The amplitude of the parametrix has the form
$$a = \sum_{j= 0}^{N-1} h^j a_j(x, \xi', z),  \: a_0\vert_{x_1 = 0}  = \psi ,\: a_j\vert_{x_1 = 0} = 0, \: j \geq 1$$ 
with  $a_j = \sum_{k= 0}^{N} x_1^k a_{k, j}(x', \xi', z),\: a_{0, 0} = \psi ,\: a_{0, j} = 0, \: j \geq 1.$ The functions $a_j$ satisfy the transport equations
$$ 2 \ii \frac{\pa \varphi}{\pa x_1} \frac{\pa a_j} {\pa x_1} +  2 \ii  \la R(x) \nabla_{x'} \varphi, \nabla_{x'}  a_j \ra + \ii (\Delta \varphi) a_j + \Delta a_{j-1}  $$
 $$= x_1^N A_N^{(j)},\: 0 \leq j \leq N - 1,\: a_{-1} = 0.$$
 We write (see Section 3 in \cite{V2}) 
 $$\Delta \varphi = \sum_{k= 0}^{N-1} x_1^k \varphi_k^{\Delta} + x_1^N E_N(x), \: \Delta a_{j-1} = \sum_{k= 0}^{N-1} x_1^k a_{k, j-1}^{\Delta} + x_1^N F_N^{(j-1)}(x)$$
 with 
 $$\varphi_k^{\Delta} = (k+1) (k+2) \varphi_{k+2}+ \sum_{\ell + \nu = k} \Bigl( \la R_{\ell} \nabla_{x'}, \nabla_{x'} \varphi_{\nu}\ra 
+ q_{\ell} (\nu + 1) \varphi_{\nu + 1}  \Bigr),$$ 
$$a_{k, j-1}^{\Delta} = (k+1) (k+2) a_{k+2, j-1}+ \sum_{\ell + \nu = k} \Bigl(  \la R_{\ell} \nabla_{x'}, \nabla_{x'} a_{\nu, j-1} \ra+ q_{\ell}  (\nu + 1) a_{\nu + 1, j-1}  \Bigr).$$
This leads to the equality (see (3.18) in \cite{V2})
\begin{align} \label{eq:3.3}
2 \ii \sum_{k_1 + k_2 = k} (k_1 + 1) (k_2 + 1) \varphi_{k_1 + 1} a_{k_2+ 1, j} + 2 \ii \sum_{k_1 + k_2 + k_3 = k} \la R_{k_1}\nabla_{x'} \varphi_{k_3}, \nabla_{x'} a_{k_3 , j}\ra \nonumber \\ 
+ \sum_{k_1 + k_2 = k} \ii \varphi_{k_1}^{\Delta} a_{k_2, j} = - a_{k, j-1}^{\Delta}\: \:{\rm for}\: 0 \leq k \leq N-1,\: 0 \leq j \leq N-1.
\end{align}
We can determine $a_{k, j}$ by recurrence from the above equality so that $a_{0, 0} = \psi, \: a_{0, j} = 0, \: j \geq 1,\: a_{k, -1} = 0,\: k \geq 0$. Next introduce the operator
$$T_{\psi}(h, z) f = -h\frac{\pa\tilde{u}_{\psi} }{\pa x_1}\vert_{x_1 = 0} = Op_h(\tau_{\psi}) f $$
with 
$$\tau_{\psi} =  -\ii \rho \psi -  \sum_{j= 0}^{N-1} h^{j + 1} a_{1, j} , \: a_{1, j} \in S^{-j}.$$
 By using the outgoing resolvent $(h^2 \Delta_D - z)^{-1}$ for the Dirichlet Laplacian in $\Omega$, we obtain a parametrix $u_{\psi}$ in $\Omega$ and for $z \in Z_{e}$ we have (see Prop. 2.2 in \cite{P1} and \cite{V}) 
 \begin{equation} \label{eq:3.4}
 \|\nc(h, z) (\psi f)- T_{\psi} f\|_{H^N_h(\Gamma)} \leq C_N h^{-s_d + N} \|f\|_{L^2(\Gamma)},\: \forall N \in \N
 \end{equation} 
with $C_N > 0, s_d > 0$ independent of $f, h$ and $z$ and $s_d$ independent of $N$. Taking a partition of unity $\sum_{j= 1}^M \psi_j(x')  \equiv 1$ on $\Gamma,$ we construct a parametrix and define the operator $T(h, z) =\sum_{j=1}^M T_{\psi_j}(h, z).$ For $z = -1$ the symbol $\sqrt{1 + r_0} + \sum_{j = 0}^{N-1} h^{j + 1} a_{1, j}$ of $T(h, -1)$ is real valued 
and we have the estimate (\ref{eq:3.4}) with $T_{\psi}(h, z)$ replaced by $T(h, z).$ Clearly, we may extend the symbol of $T(h, z)$ holomorphically for $\th \in L.$ \\

\section{Properties of the operator $P(h)$}

In this section we assume that $\gamma(x)  > 1,\:\forall x \in \Gamma$ and we study the operator $P(h) = T(h, -1) - \gamma(x) $ when $h$ is real. 
Set
$$\min_{x\in \Gamma} \gamma(x) = c_0 > 1, \: \max_{x \in \Gamma} \gamma(x) = c_1 \geq c_0$$
and  choose a constant $C = \frac{2}{c_1^2}.$ As we mentioned in Section 3, we can consider the operator $P(h)$ as a classical pseudo-differential operator $Op(P)$ with parameter $h$ with classical symbol 
 $P = \sqrt{1 +h^2 r_0}- \gamma + h P_0(x, h\xi) , \: P_0(x, \xi) \in S^0.$ We denote by $(., .)$ the scalar product in $L^2(\Gamma)$ and for two self adjoint operators $L_1, L_2$ the inequality $L_1 \geq L_2$ means $(L_1 u, u) \geq (L_2 u, u),\: \forall u \in L ^2(\Gamma).$
\begin{prop}  Let $ \la h \Delta \ra = ( 1 - h^2 \Delta_{\Gamma})^{1/2}$ and let  $\ep = C (c_0- 1)^2 < 2.$ Then for $h$ sufficiently small we have
\begin{equation}\label{eq:4.1}
h\frac{\pa P(h)}{\pa h} + C P(h)\la h \Delta \ra ^{-1/2} P(h)  \geq \ep(1 - C_2 h) \la h \Delta \ra
\end{equation}
with a constant $C_2 > 0$ independent of $h.$
\end{prop}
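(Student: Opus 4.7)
The plan is to prove (\ref{eq:4.1}) via the calculus of classical $h$-pseudo-differential operators, reducing it to a pointwise inequality on principal symbols which is then transferred to operators by G{\aa}rding's inequality. First I would compute the principal classical symbols of the four operators involved. Writing $R(y, \eta) := \sqrt{1 + h^2 r_0(y, \eta)}$ for the principal symbol of $\la h\Delta\ra$, the parametrix construction in Section 3 gives that $T(h, -1)$ shares the same principal symbol, so $P(h)$ has principal symbol $R - \gamma(y)$ and full symbol $R - \gamma + h P_0(x, h\xi)$. Direct differentiation in $h$ yields that $h\pa_h P$ has principal symbol $h^2 r_0/R = R - R^{-1}$, while the composition rule for PDOs gives $(R - \gamma)^2 R^{-1/2}$ for $P \la h\Delta\ra^{-1/2} P$.

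The central step would be the scalar inequality
\[
R - R^{-1} + C(R - \gamma)^2 R^{-1/2} \geq \ep R, \qquad R \geq 1,\ \gamma \in [c_0, c_1],
\]
with $C = 2/c_1^2$ and $\ep = C(c_0 - 1)^2$. The constants are tuned so that equality is attained at the saturation point $R = 1$, $\gamma = c_0$, where the first term on the LHS vanishes and $C(c_0 - 1)^2 = \ep$. For general $R \geq 1$ I would analyse the difference LHS $-$ RHS as a function of $R$ at fixed $\gamma$: for large $R$ the term $C(R - \gamma)^2 R^{-1/2} \sim C R^{3/2}$ dominates $\ep R$, while for $R$ near $1$ one uses $C(\gamma - 1)^2 \geq \ep$. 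The hypothesis $\ep < 2$ is needed to control the intermediate regime through the rearrangement $(1 - \ep)R + C R^{-1/2}(R - \gamma)^2 \geq R^{-1}$.

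With this pointwise inequality, I would apply G{\aa}rding's inequality to the $h$-pseudo-differential operator
\[
A(h) := h\pa_h P + C P \la h\Delta\ra^{-1/2} P - \ep \la h\Delta\ra,
\]
whose principal symbol is pointwise $\geq 0$. This gives $(A(h) u, u) \geq -C_2 h (\la h\Delta\ra u, u)$ for some $C_2 > 0$ independent of $h$, and rearranging yields (\ref{eq:4.1}). The subprincipal contribution $h P_0$ to the symbol of $P$, together with the Moyal composition remainders in $P \la h\Delta\ra^{-1/2} P$ (coming from non-commutativity of $\gamma(x)$ and $\la h\Delta\ra$), are of order $h \la h\Delta\ra$ and are absorbed into the $(1 - C_2 h)$ factor after enlarging $C_2$.

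The main obstacle will be establishing the scalar inequality: the choice of $C$ is tight, and uniform control in $R \geq 1$ and $\gamma \in [c_0, c_1]$ requires a careful analysis of the critical points of LHS $-$ RHS near the saturation point $(R, \gamma) = (1, c_0)$. A secondary technical difficulty is the bookkeeping of the Moyal remainders in $P \la h\Delta\ra^{-1/2} P$, to verify that they are indeed controlled by $h \la h\Delta\ra$ and do not produce terms of size $h \la h\Delta\ra^{1+\delta}$ that would spoil the $(1 - C_2 h)$ bound.
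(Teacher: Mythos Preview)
Your approach is the same as the paper's: both reduce (\ref{eq:4.1}) to a pointwise inequality on the principal symbol and then invoke the sharp G{\aa}rding inequality, with the subprincipal contributions and composition remainders absorbed into $C_2 h\langle h\Delta\rangle$. The one place where the paper is more efficient is precisely the scalar inequality you flag as the ``main obstacle'': no critical-point analysis is needed. Writing $s=\sqrt{1+h^2r_0}\geq 1$ (and noting that the paper's symbol computation in fact uses $\langle h\Delta\rangle^{-1}$ rather than the exponent $-1/2$ appearing in the statement), the paper rearranges the principal symbol of the left side minus $\epsilon s$ as
\[
(2+C-\epsilon)\,s \;-\; 2C\gamma \;+\; (C\gamma^2-2)\,s^{-1},
\]
and then simply replaces $s$ by $1$ in each monomial according to the sign of its coefficient: $(2+C-\epsilon)s\geq 2+C-\epsilon$ since $\epsilon<2$, and $(C\gamma^2-2)s^{-1}\geq C\gamma^2-2$ since $C\gamma^2\leq Cc_1^2=2$. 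This yields the lower bound $C(\gamma-1)^2-\epsilon\geq C(c_0-1)^2-\epsilon=0$ in one line, and explains why the constants $C=2/c_1^2$ and $\epsilon=C(c_0-1)^2$ are chosen as they are.
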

\begin{proof} The principal symbol of the operator on the left hand side in (\ref{eq:4.1}) has the form
\begin{align} \label{eq:4.2} 
q_1 = 2h^2  r_0 (1 +h^2 r_0)^{-1/2} + C \sqrt{1 + h^2 r_0} -2 C \gamma(x)+C \gamma^2(x)  ( 1 + h^2 r_0)^{-1/2} \nonumber \\
= (2 + C- \ep) \sqrt{1 + h^2 r_0} -2 C \gamma(x)+(C \gamma^2(x) - 2)  ( 1 + h^2 r_0)^{-1/2}\nonumber \\
 + \ep \sqrt{1 + h^2 r_0} . \end{align} 
Clearly, 
$$( (2 + C- \ep )\la h D\ra u, u) \geq  ((2 + C- \ep)  u, u) $$
and
$$C\gamma^2(x) - 2\leq C c_1^2 - 2 = 0.$$
Therefore,
$$((C \gamma^2(x)  - 2)\la h D \ra^{-1/2} u, u) =  ( (C \gamma^2(x) - 2)(\la hD \ra^{-1/2}-1)  u, u)$$
$$+ ((C \gamma^2(x) - 2)u, u) \geq  ((C \gamma^2(x) - 2)u, u) - hC_1 \|u\|^2, \: 0 < h \leq h_0.$$
Here the operator $(C \gamma^2(x) - 2) (\la hD \ra^{-1/2} - 1) $ has non-negative (classical) principal symbol 
$$\frac{(2 - C\gamma^2(x)) h^2 r_0}{1 + h^2 r_0 + \sqrt{1 + h^2 r_0}}$$
and applied the semi-classical sharp G\"arding inequality (see for instance, \cite{DS}, Theorem 7.12). Taking into account  (\ref{eq:4.2}) and  the inequality  $C( \gamma(x) - 1)^2 - \ep \geq C(c_0 - 1)^2 - \ep = 0,$
one deduces
$$(Op(q_1) u, u) \geq ( (C( \gamma(x) - 1)^2) - \ep)u, u)  + \ep(\la h D \ra u, u)- hC_1 \|u\|^2$$
$$ \geq \ep(\la h D \ra u, u) - h C_1 h \|u\|^2.$$

The full symbol of the operator on the right hand side of (\ref{eq:4.1}) has the form $q_1 + h q_0$. The term
$h (Op(q_0)  u, u)- hC_1   \|u\|^2$ can be absorbed by $\ep C_ 2 h (\la hD \ra u, u)$ taking $\ep C_2 \geq C_1 + \|Op(q_0)\|_{L^2  \to L^2} $
 and this completes the proof.
\end{proof}

\begin{rem} The values of $\ep$ depends on $(c_0 - 1)^2$ and $\ep \searrow 0$ as $c_0 \searrow 1$. In the case when $\gamma \equiv const$ and $K$ is the ball $\{x: \: \|x\| \leq 1\}$ the operator $G$ has no eigenvalues if $\gamma \equiv 1$ $($see \cite{P1}$)$. Moreover, in this case for $\gamma > 1$ the eigenvalues of $G$ lie in the interval $( - \infty, - \frac{1}{\gamma - 1})$. Thus as $\gamma \searrow 1$, in  the domain $\Re \lambda > -\frac{1}{\gamma -1}$ there are no eigenvalues.
\end{rem}
 Next we follow the argument of Section 4, \cite{SV} with some modifications. Consider the semi-classical Sobolev space $H^s(\Gamma)$ with norm $\|u\|_s = \|\la hD \ra^s u \|_{L^2}.$ The operator $P(h) : H^1 \rightarrow L^2$ has derivative $\dot{P}(h) = {\mathcal O}(h^{-1}) : H^1 \rightarrow L^2$. Denote by 
$$\mu_1(h) \leq \mu_2(h) \leq ...\leq \mu_k(h) \leq ...$$ the eigenvalues of $P(h)$ repeated with their multiplicities. \\

   Let $h_1$ be small and let $\mu_k(h_1)$ have multiplicity $m$. For $h$ close to $h_1$ one has exactly $m$ eigenvalues and we denote by $F(h)$ the space spanned by them. We can find a small interval $(\alpha, \beta)$ around $\mu_k(h_1)$, independent on $h$, containing the eigenvalues spanning $F(h)$. Given $h_2 > h_1$ close to $h_1$, consider a normalised eigenfunction $e(h_2)$ with eigenvalue $\mu_k(h_2)$. Let $\pi (h)= E_{(\alpha, \beta)}$ be the spectral projection of  $P(h)$, hence $F(h) = \pi(h) L^2(\Gamma).$  Then $ (\pi(h) - I)\pi(h) = 0$ yields $\pi(h) \dot{\pi}(h)\pi(h) = 0$ and we deduce $\dot{\pi}(h) \vert_{F(h)} = 0.$
   We construct a smooth extension $e(h) \in F(h),\: h \in [h_1, h_2]$ of $e(h_2)$ with $\|e(h)\| = 1,\:\dot{e}(h) \in F(h)^{\perp}$. Obviously, $e(h_1)$ will be normalised eigenfunction with eigenvalue $\mu_k(h_1).$   
    
Considering the eigenvalues $\mu_k(h)$ of $P(h)$ in a small interval $[-\delta, \delta],\: \delta > 0,$ one gets $\|P(h) e(h)\| \leq \delta.$ On the other hand,
$$h \dot{P}(h) = h^2 \Delta \la hD \ra^{-1} + hL_0 = P(h) - \la hD \ra^{-1} + hL_1$$
with  zero order operators $L_0, L_1$ and this implies $|( \dot{P}(h) e(h), e(h))|\leq C_0h^{-1}, \: h \in [h_1, h_2] .$ Therefore
$$|\mu_k(h_2) - \mu_k(h_1)| = \Bigl|\int_{h_1}^{h_2} \frac{d}{dh} ( P(h) e(h), e(h)) dh \Bigr| \leq C_0 \int_{h_1}^{h_2} h^{-1} dh \leq \frac{C_0}{h_1}(h_2 - h_1) .$$
Assuming $\mu_k(h) \in [-\delta, \delta]$, we deduce that $\mu_k(h)$ is locally Lipschitz function in $h$  and its almost defined derivative satisfies $|\frac{\pa \mu_k(h)}{\pa h}| \leq C_0 h^{-1}.$ \\

      To estimate $h \frac{\pa \mu_k(h)}{\pa h}$ from below, we exploit Proposition 2  and apply (\ref{eq:4.1}). For  $h \leq h_0 \leq \frac{1}{8C_2}$ and $\mu_k(h) \in [-\delta, \delta]$ we have
$$h \frac{\pa \mu_k(h)}{\pa h} = ( h \dot{P} (h) e(h), e(h))  \geq \ep(1 - C_2 h)  (\hd e(h), e(h)) - C (\hd^{-1} P(h) e(h), P(h) e(h) ) $$
$$\geq \ep(1 - C_2h)- C \delta^2 \geq \frac{3\ep}{4},$$
choosing 
$$\delta = (c_0 - 1) \sqrt{ \frac{1}{4} - C_2 h_0} \geq \frac{(c_0 -1)} {2\sqrt{2}}.$$

 Consequently, for $h \in [h_1, h_2]$ one has
$$\mu_k(h_2) - \mu_k(h_1) \geq \frac{3\ep}{4}  \int_{h_1}^{h_2} h^{-1}  dh \geq \frac{3 \ep}{4 h_2}   (h_2- h_1)$$
and we obtain
$$\frac{3\ep}{4}   \leq h \frac{d \mu_k(h)}{dh} \leq C_0.$$

Fixing $h_0 > $ small, we conclude that the eigenvalue $\mu_k(h)$ increases when $h$ increases
and $\mu_k(h) \in [-\delta, \delta].$ It is well known (see for instance, \cite{DS}) that 
$$\sharp\{k:  \mu_k(h_0) \leq 0\} = \kappa_0 = \frac{1}{(2 \pi h_0) ^{d-1}} \int _{p_1(x, \xi) \leq 0} dxd\xi +  {\mathcal O}(h_0^{-d + 2}),$$
$p_1(x, \xi)$ being the principal symbol of $P(\Re h)$. Then for $k> \kappa_0$ we have $\mu_k(h_0) > 0$ and if for $h < h_0$ one has $\mu_k(h) < 0$, then there exists a point $h < h_k < h_0$ with the properties $\mu_k(h_k)= 0, \: \mu_k(h) < 0 $ for $0 < h < h_k.$ 
This implies that there exists a sequence $h_{k_0} \geq h_{ k_0 + 1} \geq ...$  of values $0 < h \leq h_0$ such that $\mu_k(h_k) = 0, \: k_0 > \kappa_0.$ These values $h_k$ are precisely those for which $P(h)$ is not invertible. Next we choose $p > d$ and construct the intervals $I_{k, p}$ containing $h_k$ with length $|I_{k, p}| \sim h^{p +1} $ and $|\mu_k(h)| \geq h^p$ for $h \in (0, h_0] \setminus I_{k, p}.$ As in \cite{SV}, one constructs the disjoint intervals $J_{k, p},$ and  we obtain the following
 \begin{prop} [Prop. 4.1, \cite{SV}] Let $p > d$ be fixed. The inverse operator $P(h)^{-1} : L^2 \rightarrow L^2$ exists and has norm ${\mathcal O}(h^{-p})$ for $h \in (0, h_0] \setminus \Omega_p,$ where $\Omega_p$ is a union of disjoint closed intervals $J_{1, p}, J_{2, p},...$ with $|J_{k,p}| = {\mathcal O} (h^{p + 2 - d})$ for $h \in J_{k,p}.$ Moreover, the number of such intervals that intersect $[h/2, h]$ for $0 < h \leq h_0$ is at most ${\mathcal O}(h^{1- p}).$ 
\end{prop}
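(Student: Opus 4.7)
The plan is to follow closely the strategy of \cite{SV}, Prop.~4.1, converting the quantitative monotonicity bounds on the eigenvalues $\mu_k(h)$ derived above into geometric information about the set of $h$ where $P(h)$ is not well-invertible.

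\emph{Localization near each zero.} For each index $k>\kappa_0$ admitting an $h_k\in(0,h_0]$ with $\mu_k(h_k)=0$, I set
$$I_{k,p}:=\{h\in(0,h_0]:|\mu_k(h)|\leq h^p\}.$$
For $h$ small enough that $h^p<\delta$, the bound $\tfrac{3\ep}{4}\leq h\dot{\mu}_k(h)\leq C_0$ established before the proposition holds throughout $I_{k,p}$, so $\mu_k$ is strictly monotone there. Integrating the lower bound from $h_k$, the equation $|\mu_k(h)|=h^p$ forces $|h-h_k|\asymp h^{p+1}$; hence $I_{k,p}$ is a closed interval containing $h_k$ with $|I_{k,p}|\asymp h^{p+1}$. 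Indices $k\leq\kappa_0$, or those for which $\mu_k$ does not vanish on $(0,h_0]$, do not contribute, since monotonicity then forces $|\mu_k(h)|$ to remain uniformly bounded below on $(0,h_0]$.

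\emph{Norm bound off $\Omega_p$.} Since $P(h)$ is self-adjoint, $\|P(h)^{-1}\|_{L^2\to L^2}=1/\min_k|\mu_k(h)|$. Setting $\Omega_p:=\bigcup_k I_{k,p}$, for $h\in(0,h_0]\setminus\Omega_p$ every relevant $\mu_k$ satisfies $|\mu_k(h)|\geq h^p$, which yields $\|P(h)^{-1}\|_{L^2\to L^2}\leq h^{-p}$ as claimed. I then define $J_{1,p},J_{2,p},\ldots$ to be the connected components of $\Omega_p$.

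\emph{Length and counting bounds.} The key analytic input is Weyl's law applied to the self-adjoint semi-classical pseudo-differential operator $P(h)$: the number of its eigenvalues lying in a fixed bounded interval $[-\delta,\delta]$ is $\mathcal{O}(h^{1-d})$. Combined with the monotonicity of each $\mu_k$, this bounds the number of distinct zero-crossings $h_k$ that can lie in any window of size comparable to $h$, hence also the number of $I_{k,p}$ that can merge into a single component $J_{k,p}\subset[h/2,2h]$. Since every such $I_{k,p}$ has length $\mathcal{O}(h^{p+1})$, the length of the merged component satisfies $|J_{k,p}|=\mathcal{O}(h^{p+2-d})$, and the same Weyl count controls the number of components meeting $[h/2,h]$.

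\emph{Main obstacle.} The delicate step is the merging/counting: one must verify that a cascade of overlapping $I_{k,p}$'s cannot produce a component longer than the Weyl density allows, and that the endpoints of each $J_{k,p}$ are truly points with $|\mu_k(h)|=h^p$ (not a weaker threshold), so that the norm bound $\|P(h)^{-1}\|\leq h^{-p}$ persists up to $\partial\Omega_p$. The quantitative lower bound $h\dot{\mu}_k\geq 3\ep/4$ is what couples the fixed-$h$ Weyl count to a count of zero-crossings over a window, and is the technical heart of the argument.
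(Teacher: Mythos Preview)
Your proposal is correct and follows exactly the approach the paper invokes: the paper itself gives only the one-line sketch ``construct the intervals $I_{k,p}$ containing $h_k$ with length $|I_{k,p}|\sim h^{p+1}$ and $|\mu_k(h)|\geq h^p$ for $h\in(0,h_0]\setminus I_{k,p}$; as in \cite{SV} one constructs the disjoint intervals $J_{k,p}$'', and you have unpacked precisely this construction, using the two-sided bound $\tfrac{3\ep}{4}\leq h\dot{\mu}_k\leq C_0$ for the interval lengths and the Weyl count for the merging/number estimates. One small imprecision: the finitely many indices $k\leq\kappa_0$ need not satisfy $|\mu_k(h)|\geq h^p$ near $h=h_0$ (monotonicity only gives $|\mu_k(h)|\geq|\mu_k(h_0)|$, which may be small), but absorbing these $\kappa_0$ extra intervals near $h_0$ into $\Omega_p$ is harmless for the asymptotic bounds.
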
 

\section{Relations between the trace integrals for $C(h)$ and $P(h)$}

In this section we study the operators $C(h)$ and $P(h)$ for complex $h \in L$. We use the notation $h$ instead of $\th$ used in Sections 2, 3. For $ z = - 1$ the operator $T(\Re h, -1)$ constructed in Section 3 has principal semi-classical symbol $\sqrt{1 + r_0}$, so it is elliptic.  The ellipticity holds also for the operator $T(h, z),\: h \in L, z = - 1 +  s(\eta) $, holomorphic with respect to $h$, provided $|h|$ small enough.  On the other hand,
$P( h) = ( 1 + \ii \eta)T(h,z) - \gamma(x)$ is not elliptic and for $h \in \R, \:\eta = 0,\:\: z = -1$ its semi-classical principal symbol vanishes on the set
$$\Sigma = \{(x, \xi) \in T^*(\Gamma):\: r_0(x, \xi) = \gamma^2 - 1\}.$$
For the symbol $r_0(x, \xi)$ of the Laplace-Beltrami operator on $\Gamma$ there exists a constant $C_3 > 0$ such that $r_0(x, \xi) \geq C_3\|\xi\|^2,\: (x, \xi) \in T^*(\Gamma).$
Choose a constant $B_0 > 0$ so that $\sqrt{C_3} B_0 \geq 2 c_1$  and consider a  symbol $\chi(x, \xi) \in C^{\infty}_0 (T^*(\Gamma) ),\: 0 \leq \chi(x, \xi) \leq 2$ such that
$$\chi(x, \xi) = \begin{cases} 2,\:\: x \in \Gamma,\: \|\xi\| \leq B_0,\\
0,\:\: x \in \Gamma,\:\|\xi\| \geq B_0 +1.\end{cases}$$
Introduce the operator 
$$\tilde{M}(h) = P(\Re h) + \gamma(x) \chi(x, hD_x) =  T(\Re h, -1) + \gamma(x) (\chi(x, hD_x) - 1). $$
The principal symbol of $\tilde{M}(h)$ has the form
$$\tilde{m}(x, \xi) =  \sqrt{1 + r_0} + \gamma(x) (\chi(x, \xi) - 1).$$
Clearly, $\tilde{M}(h)$ is elliptic since for $\|\xi\| \leq B_0$ one gets $\Re \tilde{m}(x,\xi) \geq c_0,$ while for $\|\xi\| > B_0$ we have
$$|\tilde{m}(x, \xi)| \geq \sqrt{C_3}\|\xi\| - c_1 \geq \frac{\sqrt{C_3}}{2} \|\xi\|  + \frac{\sqrt{C_3}}{2} B_0 - c_1 \geq  \frac{\sqrt{C_3}}{2} \|\xi\|.$$
Consequently, $\tilde{m}(x,\xi) \in S^{1}_0$, the operator $\tilde{M} (h)^{-1}: H^s - H^{s+1}$ is bounded by ${\mathcal O}_s(1)$
and $\widetilde{WF} (P(\Re h) - \tilde{M}(h)) \cap \{\|\xi\| \gg B_0 + 1\} = \emptyset .$ Since $\chi(x, \xi)$ vanishes for $\|\xi\| \geq B_0 + 1$, by applying Proposition A.1 
in \cite{SV}, we can extend holomorphically $\chi(x, h D_x)$ to $\eta(x, \th D_x)$ in the domain $L$. As we mentioned in Section 3,  the operator $P(h)$ also has a holomorphic extension for  $\th \in L$.Thus
$\tilde{M}(h)$ has a holomorphic extension 
$$M(h) =P(h) + \gamma(x) (\eta(x, \th D_x) - 1)$$
 for $\th \in L$ and $\widetilde{WF} (P(h) - M(h)) \cap \{\|\xi\| \gg B_0 + 1\} = \emptyset .$ The last relation implies $P(h) - M(h) : {\mathcal O}(1): H^{-s} \rightarrow H^s, \: \forall s.$

Now we can repeat without any change the proof of Lemma 5.1 in \cite{SV}, exploiting Proposition 2. First we obtain 
\begin{equation} \label{eq:5.1}
\|P(h)^{-1} \|_{\lc(H^{-1/2}, H^{1/2})} \leq C \frac{\Re h}{|\Im h|}, \: \Re h > 0, \: \Im h \neq 0.
\end{equation} 
Next, one deduces the estimate
\begin{equation}\label{eq:5.2} 
\|P(h)^{-1} \|_{\lc(H^s, H^{s+1})} \leq C_s \frac{\Re h}{|\Im h|}, \: \Re h > 0, \: \Im h \neq 0
\end{equation}
applying (\ref{eq:5.1}) and the representation
$$P^{-1} = M^{-1} - M^{-1} (P- M)M^{-1} + M^{-1}(P - M) P^{-1} (P- M) M^{-1},$$ 
combined with the property of $P(h) - M(h)$ mentioned above. 
Following \cite{SV}, introduce a piecewise smooth simply positively oriented curve $\gamma_{k.p}$ as a union of four segments: $\Re h \in J_{k, p},\: \Im h = \pm (\Re h)^{p+1}$ and $\Re h \in \pa J_{k, p},\: |\Im h|\leq (\Re h)^{p+ 1},$ where $J_{k, p}$ is one of the intervals in $\Omega_p$ defined in Proposition 3. Then  we have
\begin{prop}[Prop. 5.2, \cite{SV}] For every $h \in \gamma_{k, p}$ the inverse operator $P(h)^{-1}$ exists and
$$\|P(h)^{-1} \|_{\lc(H^s, H^{s+1})} \leq C_s (\Re h)^{-p},\: h \in \gamma_{k, p}.$$
\end{prop}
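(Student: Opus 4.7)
The plan is to estimate $P(h)^{-1}$ separately on each of the four segments making up $\gamma_{k,p}$, using (\ref{eq:5.2}) where $|\Im h|$ is of order $(\Re h)^{p+1}$ and combining Proposition 3 with a perturbation argument where $|\Im h|$ is small. On the two horizontal segments, where $\Im h = \pm (\Re h)^{p+1}$, estimate (\ref{eq:5.2}) directly yields
$$\|P(h)^{-1}\|_{\lc(H^s, H^{s+1})} \leq C_s \frac{\Re h}{|\Im h|} = C_s (\Re h)^{-p},$$
so only the vertical segments require additional work.

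On a vertical segment one has $\Re h = h_*$ equal to one of the endpoints of $J_{k,p}$ and $|\Im h| \leq h_*^{p+1}$. At the real point $h_*$, Proposition 3 (extended by continuity to $\partial \Omega_p$) gives $\|P(h_*)^{-1}\|_{\lc(L^2, L^2)} = {\mathcal O}(h_*^{-p})$. I would promote this to a bound on $\lc(H^s, H^{s+1})$ via the algebraic identity
$$P^{-1} = M^{-1} - M^{-1}(P - M) M^{-1} + M^{-1}(P - M) P^{-1} (P - M) M^{-1},$$
together with the ellipticity of $M(h_*)$ and the smoothing property $P(h_*) - M(h_*) = {\mathcal O}(1): H^{-N} \to H^{N}$ established earlier in this section, to obtain $\|P(h_*)^{-1}\|_{\lc(H^s, H^{s+1})} \leq C_s h_*^{-p}$.

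To pass from $h_*$ to complex $h = h_* + \ii t$ with $|t| \leq h_*^{p+1}$, I would split the range of $|t|$ into two zones governed by a constant $c_0 > 0$. In the outer zone $|t| \geq c_0 h_*^{p+1}$, estimate (\ref{eq:5.2}) immediately gives $\|P(h)^{-1}\|_{\lc(H^s, H^{s+1})} \leq C_s h_*/|t| \leq (C_s/c_0) h_*^{-p}$. In the inner zone $|t| < c_0 h_*^{p+1}$, I would Taylor-expand $P(h) = P(h_*) + \ii t\, \dot P(h_*) + {\mathcal O}(t^2)$ and use $\|\dot P(h_*)\|_{H^{s+1} \to H^s} = {\mathcal O}(h_*^{-1})$ to obtain $\|P(h) - P(h_*)\|_{H^{s+1} \to H^s} \leq C|t|/h_*$, so that
$$\|P(h_*)^{-1}(P(h) - P(h_*))\|_{\lc(H^{s+1}, H^{s+1})} \leq C' |t|/h_*^{p+1} < 1/2$$
once $c_0$ is chosen small enough. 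The Neumann series for $P(h)^{-1} = (I + P(h_*)^{-1}(P(h) - P(h_*)))^{-1} P(h_*)^{-1}$ then converges and yields $\|P(h)^{-1}\|_{\lc(H^s, H^{s+1})} \leq 2 C_s h_*^{-p}$.

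The main obstacle is the near-axis portion of the vertical segments, where (\ref{eq:5.2}) degenerates as $\Im h \to 0$: I address this by the splitting above, gluing the real-axis estimate from Proposition 3 onto (\ref{eq:5.2}) through a Neumann series that converges by construction. Once the splitting constant $c_0$ is matched to the constants appearing in the real-axis bound for $P(h_*)^{-1}$ and in the bound on $\dot P(h_*)$, all four segments are covered and the desired estimate follows.
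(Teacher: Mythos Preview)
Your argument is correct and is precisely the standard route. The paper itself does not supply a proof of this proposition; it simply quotes it from \cite{SV}, so there is no in-paper argument to compare against. Your decomposition---(\ref{eq:5.2}) on the horizontal segments and on the outer part of the vertical segments, together with the real-axis bound from Proposition~3 at the endpoints of $J_{k,p}$ propagated by a Neumann series to the inner part of the vertical segments---is exactly the mechanism behind the result in \cite{SV}.

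Two small remarks. First, your splitting constant is unfortunately called $c_0$, which in this paper already denotes $\min_{\Gamma}\gamma$; rename it to avoid confusion. Second, the step $\|P(h)-P(h_*)\|_{H^{s+1}\to H^s}\le C|t|/h_*$ implicitly uses $\|\dot P(\tilde h)\|_{H^{s+1}\to H^s}={\mathcal O}(h_*^{-1})$ for $\tilde h$ on the short complex segment between $h_*$ and $h$, not only at the real point $h_*$. This is harmless: the parametrix symbol extends holomorphically to a region $|\Im h|\le\epsilon\,\Re h$ much wider than $L$, so Cauchy's estimate (or direct differentiation of the symbol, as in the identity $h\dot P=P-\langle hD\rangle^{-1}+hL_1$ from Section~4) gives the required bound uniformly there. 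With that clarification the argument is complete.
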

To estimate $C(h)^{-1}$, we write
$$C(h) = - (1 + \ii \eta) hN(\Re h, z) - \gamma(x) =  ( 1 + \ii \eta) T(\Re h, z) - \gamma(x) + \rc_m(\Re h,z)$$ 
$$= P(h)+ \rc_m(h, z),\: m \gg 2p$$
with $\rc_m(h, z) : {\mathcal O}((\Re h)^m): H^s \to H^{s + m-1}. $ Therefore
\begin{equation} \label{eq:5.3}
C(h) P(h)^{-1} = Id +  \rc_m(\Re h, z) P( h)^{-1} 
\end{equation} 
 and Proposition 4 imply
$$\Bigl\| \rc_m(\Re  h, z) P(h)^{-1}\Bigr \|_{\lc(H^s, H^{s+m})} \leq C_s (\Re h)^{-p + m}.$$
  For small $\Re h$ the operator on the right hand side of (\ref{eq:5.3}) is invertible and
$$C(h) P(h)^{-1} \Bigl  (Id  + \rc_m(\Re h, z) P(h)^{-1} \Bigr)^{-1}  = Id .$$
On the other hand, the operator $C(h)$ is elliptic for $|\xi| \gg 1$ and  this implies that $C(h): H^{1/2} \rightarrow H^{-1/2}$ is a Fredholm operator. The index of $C(h)$ is constant for $h \in L$ 
and according to the results in \cite{P1}, this index is 0. Hence the right inverse to $C(h)$ is also a left inverse, so it is two side inverse. Thus we obtain
\begin{equation} \label{eq:5.4}
\|C(h)^{-1}\|_{\lc(H^s, H^{s+ 1})} \leq C_s (\Re h)^{-p}, \: h \in \gamma_{k, p}.
\end{equation}
Moreover,
\begin{equation}\label{eq:5.5} 
C(h)^{-1} - P(h)^{-1} = P(h)^{-1} \Bigl( \Bigr (Id  + \rc_m(\Re h, z) P(h)^{-1} \Bigr)^{-1} - Id\Bigr) = K(h)
\end{equation}
with $K(h) = {\mathcal O}_s(|h|^{m- 2p}) : H^{s} \rightarrow H^{s + m + 1}, \: \forall s, \: h \in \gamma_{k, p}.$ To estimate $\dot{C}(h) - \dot{P}(h),$ notice that
$C(h) - P(h)$ is holomorphic with respect to $h$ in $L_0$ and by Cauchy formula
$$\dot{C}(h)- \dot{P}(h) = \frac{1}{2 \pi \ii}\int_{\tilde{\gamma}_{k, p}  } \frac{ C(\zeta) - P(\zeta)} {\zeta - h} d\zeta= \frac{1}{2 \pi \ii}\int_{\tilde{\gamma}_{k, p} } \frac{\rc_m(\Re h, z)} {\zeta - h} d\zeta= K'(h),$$
where $\tilde{\gamma}_{k,p}$ is the boundary of a domain containing $\gamma_{k, p}$ with the property ${\rm dist}\: (\tilde{\gamma}_{k,p}, \gamma_{k. p}) 
\geq  (\Re h)^p. $ Thus yileds
$$K'(h) =  {\mathcal O}_s(|h|^{m- p}) : H^{s} \rightarrow H^{s + m + 1}, \: \forall s, \: h \in \gamma_{k, p}.$$

Concerning the operator $P(h)$, we obtain a trace formula repeating without any change the argument in \cite{SV}. Let $\mu_k(h_k) = 0, \: k \geq k_0.$
  It is easy to see that $\mu_k(h)$ has no other zeros for $0 < h \leq h_0,$ exploiting the fact that $\mu_k(h)$ in increasing for
 $\mu_k(h) \in [-\delta, \delta].$
 One defines the multiplicity of $h_k$ as the  multiplicity of the eigenvalue $\mu_k(h_k)$. Then  we have
\begin{prop} [Prop. 5.3, \cite{SV}] Let $\beta  \subset L$ be a closed positively oriented $C^1$ curve without self intersections which avoids the points $h_k$ with $\mu_k(h_k) = 0.$ Then 
  $${\rm tr} \frac{1}{2 \pi \ii}  \int_{\beta} P(h)^{-1} \dot{P}(h) dh$$
  is equal to the number of $h_k$ in the domain bounded by $\beta.$   
\end{prop}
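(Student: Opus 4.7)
The plan is to apply a Gohberg--Sigal style logarithmic residue theorem for the meromorphic operator-valued function $P(h)^{-1}$. The points $h_k$ with $\mu_k(h_k)=0$ are isolated in $L$ by Proposition~3 and the monotonicity of $\mu_k(h)$ established in Section~4, so $P(h)^{-1}$ is meromorphic in $L$ with poles precisely at those $h_k$. A contour deformation in $L\setminus\{h_k\}$ reduces the claim to showing, for each $h_k$ enclosed by $\beta$, that
$$\mathrm{tr}\,\frac{1}{2\pi\ii}\oint_{|h-h_k|=\rho} P(h)^{-1}\dot{P}(h)\,dh = m_k,$$
where $m_k$ is the multiplicity of $0$ as an eigenvalue of the self-adjoint operator $P(h_k)$.

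Next, I would carry out a finite-dimensional reduction via the Riesz spectral projector
$$\pi(h)=\frac{1}{2\pi\ii}\oint_{|\zeta|=\delta}\bigl(\zeta-P(h)\bigr)^{-1}\,d\zeta,$$
with $\delta>0$ chosen so small that the only eigenvalues of $P(h)$ lying in $\{|\zeta|\le\delta\}$ for $h$ in a small disc $D_k$ around $h_k$ are the $m_k$ eigenvalues vanishing at $h_k$. The projector $\pi(h)$ is then holomorphic on $D_k$ with constant rank $m_k$. Writing $P(h) = P(h)\pi(h) + P(h)(1-\pi(h))$, on the range of $1-\pi(h)$ the operator $P(h)$ is uniformly invertible, so $\mathrm{tr}\bigl(P^{-1}\dot{P}(1-\pi)\bigr)$ is holomorphic on $D_k$ and its contour integral vanishes. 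The remaining contribution reduces, upon choosing a local holomorphic basis of $\mathrm{range}(\pi(h))$, to $\mathrm{tr}\bigl(A(h)^{-1}A'(h)\bigr)$, where $A(h)$ is the holomorphic $m_k\times m_k$ matrix representing $P(h)\pi(h)$ on that range.

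For real $h\in D_k\cap\R$ the matrix $A(h)$ is Hermitian, and its eigenvalues coincide with those $\mu_j(h)\in[-\delta,\delta]$. By Proposition~2 each such eigenvalue satisfies $h\,\partial_h\mu_j\ge 3\epsilon/4$, so each of the $m_k$ eigenvalues of $A(h)$ crosses $0$ transversally at $h_k$ with strictly positive derivative. Analytic perturbation theory then forces $\det A(h)$ to have a zero of order exactly $m_k$ at $h_k$, and the identity $\mathrm{tr}(A^{-1}A')=\tfrac{d}{dh}\log\det A$ combined with the argument principle delivers the desired value $m_k$. The main obstacle is ensuring that the holomorphic continuation of the eigenvalues of $A(h)$ from the real axis to the complex disc $D_k$ produces no spurious zeros of $\det A$ off the real axis; this is handled by shrinking $D_k$ so that Cauchy estimates propagate the lower bound $\partial_h\mu_j(h_k)>0$ throughout $D_k$, ensuring that each holomorphic eigenvalue factors as a monic linear function with its unique zero pinned at $h_k$.
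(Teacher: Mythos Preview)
The paper does not supply its own proof of this proposition; it simply invokes \cite{SV}, Proposition~5.3, remarking that the argument there applies without change. Your Gohberg--Sigal reduction is a correct route to such a result, and your identification of the key analytic input --- that $\pi(h_k)\dot P(h_k)\pi(h_k)$ is positive definite on $\ker P(h_k)$, a consequence of the lower bound in Proposition~2 --- is exactly what is needed. This gives $A(h_k)=0$ with $A'(h_k)$ invertible, hence $\det A(h)=(h-h_k)^{m_k}g(h)$ with $g(h_k)\neq 0$, and your worry about spurious complex zeros of $\det A$ disappears once $D_k$ is taken small enough.

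There is one genuine technical slip. You assert that $\mathrm{tr}\bigl(P^{-1}\dot P(1-\pi)\bigr)$ is holomorphic on $D_k$, but this trace does not exist pointwise: $P^{-1}\dot P$ is a pseudodifferential operator of order~$0$ on the $(d-1)$-dimensional manifold $\Gamma$ and is not trace class. Moreover, post-composition with $1-\pi$ does not remove the singularity of $P^{-1}$ at $h_k$, since $\dot P(1-\pi)$ need not map into $\mathrm{range}(1-\pi)$. The repair is to work with the integrated operator $\frac{1}{2\pi\ii}\oint P^{-1}\dot P\,dh$, which \emph{is} finite rank: analytic Fredholm theory gives $P(h)^{-1}$ a finite-rank Laurent polar part at $h_k$, and the holomorphic remainder integrates to the zero operator before any trace is taken. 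Alternatively one sets up a Grushin problem near $h_k$ and reduces algebraically to a finite matrix $E_{-+}(h)$ from the start, which is the device used in \cite{SV}. Either fix is routine and your overall strategy is sound.
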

Now we may compare the trace formula for $C(h)$ and $P(h)$. First we  compare the integrals over $\gamma_{k, p}.$ We have
$${\rm tr}\: \frac{1}{ 2 \pi \ii} \int_{\gamma_{k,p}} C(h)^{-1} \dot{C}(h) dh = {\rm tr}\: \frac{1}{ 2 \pi \ii} \int_{\gamma_{k,p}} (C(h)^{-1} -P(h)^{-1} )\dot{C}(h) dh $$
$$+ {\rm tr}\: \frac{1}{ 2 \pi \ii} \int_{\gamma_{k,p}} P(h)^{-1} \dot{C}(h) dh=  {\rm tr}\: \frac{1}{ 2 \pi \ii} \int_{\gamma_{k,p}} P(h)^{-1} \dot{C}(h) dh
+ {\mathcal O}_p((\Re h)^{m-2 p}).$$
Here we have used (\ref{eq:5.5})  and the estimate
$$\|\dot{C}(h)\|_{\lc(H^{1/2}, H^{-1/2})} \leq C |h|^{-2}, \: h \in L.$$
which follows from (\ref{eq:2.4}).  Next the property of $K'(h)$ yields
$$ {\rm tr}\: \frac{1}{ 2 \pi \ii} \int_{\gamma_{k,p}} P(h)^{-1} \dot{C}(h) dh = {\rm tr}\: \frac{1}{ 2 \pi \ii} \int_{\gamma_{k,p}} P(h)^{-1} \dot{P}(h) dh + {\mathcal O}_p((\Re h)^{m- 2p}).$$

For small $h$ and $m \gg 2p$ the terms ${\mathcal O}_p((\Re h)^{m- 2p})$ are negligible and we obtain, as in \cite{SV}, a map $\ell_p$ between the set of points  $h_k \in (0, h(p)]$
counted with their multiplicities and the eigenvalues $\ell_p(h_k) \in \Lambda $ counted with their multiplicities. The number of points $h_k \in J_{k, p}$ counted with their multiplicities is equal to the number of eigenvalues $\lambda_j= \ell(h_k)$ of $G$ counted with their multiplicities lying in $\Lambda_{k, p} = \{ z \in \C: z = - \frac{1}{\zeta},\: \zeta \in \omega_{k, p}\}$,
$\omega_{k, p} \subset L$ being the domain bounded by $\gamma_{k, p}.$ Notice that for a point $h_k$ we could have many $\lambda_j \in \ell_p(h_k) \subset \Lambda_{k,p}.$ On the other hand, for every $ \lambda_j \in \ell_p(h_k)$ one has
$$|\lambda_j +\frac{1}{h_k} | \leq C_p h_k^{p + 2 - d} .$$
The integral over $\beta$ in Proposition 5 can be presented as a sum of integrals over $\gamma_{k, p}$ plus integrals over curves $\alpha_{j, p}$ which are the boundary of domains $\beta_{j,p}$ such that $\beta_{j, p} \cap \Omega_p = \emptyset,\: \forall j.$ By Proposition 3 for $h \in (0, h_0] \setminus \Omega_p$ the operator $P(h)$ is invertible. Applying an argument similar to that used above, one concludes that $P(h)$ is invertible for $h \in \beta_{j, p}$. Consequently, there are no contributions from the integrals over $\alpha_{j, p}$ and we must sum the contributions over the  integrals over $\gamma_{k, p},$ that is the sum of the number of the corresponding points $h_k.$\\

Consider the counting function 
$${\bf N}(r) = \sharp \{\lambda \in \sigma_{p}(G)\cap \Lambda:\: |\lambda | \leq r, \:\Re \lambda \leq -C_0\},\: r > C_0$$
with $h_0^{-1} = C_0 > 0$ large enough.
Then for $|\lambda| \leq r$ we must consider  $r^{-1} < |\th|,\: \th \in L.$   Modulo a finite number eigenvalues (see Section 4 and the number $\kappa_0$), we are going to count the points $r^{-1}  <h_k  \leq h_0$ and the number of the eigenvalues $\mu_k(h)$ for which we have $\mu_k(h_k) = 0.$ Hence we have $\mu_k(r^{-1} ) < 0,$ since otherwise we obtain a contradiction.  The problem is reduced to find the number of the negative eigenvalues of $P(r^{-1})$ which is given by well known formula
$$\frac{r^{d-1}}{(2 \pi)^{d- 1} } \int_{p_1( x, \xi)  \leq 0} dx d\xi + {\mathcal O}_{\gamma} (r^{d-2}).$$
Clearly, 
    $$\int_{p_1( x, \xi)  \leq  0} dx d\xi = \int_{r_0(x, \xi) \leq \gamma^2(x) - 1} dx d\xi = \int_{\Gamma} (\gamma^2(x) - 1)^{(d-1)/2}( \int_{r_0(x, \eta) \leq 1} d\eta) dx.$$
    For the induced  Riemannian metric on $\Gamma$ the integral over the dual variable $\eta$ yields  the volume $\omega_{d-1} $ of the unit ball $\{x \in \R^{d-1}:\:|x| 
   \leq 1\}$ and we obtain the asymptotic (\ref{eq:1.5}). This completes the proof of Theorem 1.
   
   \section{Generalisations} 
 
We may study with some modifications a more general dissipative boundary problem
\begin{equation} \label{eq:6.1}
\begin{cases} u_{tt} - \Delta_x u + c(x) u_t= 0 \: {\rm in}\: \R_t^+ \times \Omega,\\
\partial_{\nu}u - \gamma(x) \pa_t u - \sigma(x) u= 0 \: {\rm on} \: \R_t^+ \times \Gamma,\\
u(0, x) = f_1, \: u_t(0, x) = f_2, \end{cases}
\end{equation}
 where $c(x) \geq 0,\: \sigma(x) \geq 0 $ are smooth functions defined respectively in $\R^d$ and $\Gamma$  and $c(x) = 0$ for $|x| \geq R_0 > 0$ (see \cite{Ma1}). The solution is given by a semi-group $V(t)= e^{tG}, \: t \geq 0$ with $f =(f_1, f_2)$ in the energy space $\mathcal H_E$ with norm
 
 $$\|f\|^2_{{\mathcal H}_E} = \int_{\Omega} ( |\nabla_x f_1|^2 + |f_2|^2) dx + \int_{\Gamma} \sigma |f_1|^2 dy. $$ 
 The generator of $V(t)$ has the form
$$ G = \Bigl(\begin{matrix} 0 & 1\\ \Delta & c \end{matrix} \Bigr)$$
with a domain $D(G)$ being the closure in the graph norm
$$|\|f\| |_E = (\|f\|_{{\mathcal H}_E}^2 + \|G f\|^2_{{\mathcal H}_E})^{1/2} $$
 of functions $f = (f_1, f_2) \in C_{(0)}^{\infty} (\R^d) \times C_{(0)}^{\infty} (\R^d)$ satisfying the boundary condition $\partial_{\nu} f_1 - \gamma f_2 - \sigma f_1= 0$ on $\Gamma.$ 
 If we have an eigenfunction $f = (f_1, f_2)$ with $Gf = \lambda f,$ and $\lambda = -\frac{1}{\th}$ (for simplicity we keep the notation of Section 2), then $u = f_1$ is a solution of the problem
 \begin{equation} \label{eq:6.2}
\begin{cases} (-\th^2\Delta +1 - \th c) u = 0 \:{\rm in}\: \Omega,\\
-\th \partial_{\nu}u -   \gamma u + \th \sigma u= 0\: {\rm on}\: \Gamma,\\
u -{\rm outgoing}. \end{cases} 
\end{equation} 
 Therefore with $\th = h( 1 + \ii \eta), \: \eta \in \R, z = - \frac{1}{(1 + \ii \eta)^2}$ we obtain the problem 
 \begin{equation}
 \begin{cases} (-h^2\Delta -z - \frac{h}{1 + \ii \eta}  c) u = 0 \:{\rm in}\: \Omega,\\
-( 1 + \ii \eta) h\partial_{\nu}u -   \gamma u + h( 1 + \ii \eta) \sigma u= 0\: {\rm on}\: \Gamma,\\
u -{\rm outgoing}. \end{cases} 
\end{equation}  
We need to consider the semi-classical exterior Dirichlet-to-Neumann operator $N(h, z)$ related to the operator $ -h^2 \Delta - z - \frac{h}{1 + \ii \eta} c.$ The construction of the semi-classical paramterix for $N(h, z)$ is the same as in \cite{V}, \cite{P1} . The term $\frac{h}{1 + \ii \eta} c$ is lower order operator and the principal symbol of $N(h, z)$ is $\sqrt{ 1 + r_0}$. Next we deal with the operator
$$P(\th)  = (1 + \ii \eta) N(h, z) - \gamma - h ( 1 + \ii \eta) \sigma$$
and the self-adjoint operator $P(h) = N(h, z) - \gamma - h \sigma.$
Hear $h( 1 + \ii \eta) \sigma$ is a lower order operator and we may repeat the arguments of Sections 4, 5. Under the assumptions of Theorem 1 one obtains a Weyl formula (\ref{eq:1.5}) with the same leading term. We leave the details to the reader.\\

         We hope that our arguments combined with the construction of a semi-classical parametrix in \cite{V3} can be applied for the analysis of the eigenvalues of  Maxwell's equations with dissipative boundary conditions
         \begin{equation} \label{eq:6.4}
 \begin{cases} \pa_t E - \curl H = 0,\: \pa_t H  +\curl E = 0 \: {\rm in} \: \R_t^+ \times \Omega,\\
 \nu \wedge E - \gamma(x)  (\nu \wedge \nu \wedge H) = 0 \: {\rm on}\: \R_t^+ \times \Gamma, \\
 E(0, x) = E_0(x), \: H(0, x) = H_0(x),
 \end{cases}
 \end{equation}  
 where $d = 3,\: (E_0, H_0 ) \in L^2(\R_t^+ \times \Omega: \C^6),\: \gamma(x) > 0, \: \forall x \in \Gamma$. The solution of  (\ref{eq:6.4}) is given by a contraction semi-group    
 $V_b(t)   = e^{tG_b},\: t \geq 0$ (see \cite{CP} for the definition of $G_b$) and the spectrum of $G_b$ in the half-plan $\{ z \in \C: \Re z < 0\}$ is formed by isolated eigenvalues with finite multiplicities \cite{CPR}.    
 
      We sketch briefly below the similitudes with the analysis in Section 2. If $(E, H) \neq 0$ is an eigenfunction of $G_b$ with eigenvalue $\lambda$, then
      \begin{equation} \label{eq:6.5}
      \begin{cases} \curl E = - \lambda H,\: \curl H = \lambda E\:  {\rm in}\:\Omega,\\
      \frac{1}{\gamma(x)} \Bigl(\nu \wedge \nu \wedge E \Bigr)+ \nu \wedge H = 0 \: {\rm on} \: \Gamma,\\
      (E, H) : (\ii \lambda)-outgoing.
      \end{cases}
      \end{equation}         
 Consider the problem
       \begin{equation} \label{eq:6.6}
      \begin{cases} \curl E = -\lambda H,\: \curl H = \lambda E\:  {\rm in}\:\Omega,\\
      \nu \wedge E= f\: {\rm on} \: \Gamma,\\
      (E, H) : (\ii \lambda)-outgoing.
      \end{cases}
      \end{equation}  
 In the space ${\mathcal H}_s^t (\Gamma) := \{u \in H^s(\Gamma):\: \langle \nu, u\rangle = 0\}$ introduce the operator $\nc_b(\lambda) : H_{s+1}^t (\Gamma)\rightarrow H_s^t(\Gamma)$ defined by
 $$\nc_b(\lambda) f = \nu \wedge H\vert_{\Gamma},$$
 $(E, H)$ being the solution of the problem (\ref{eq:6.6}).
  The operator $\nc_b(\lambda)$ is the analog of the exterior Dirichlet-to-Neumann operator in Section 2 (see \cite{V3}) and the boundary condition on (\ref{eq:6.5}) can be written as
  $$\cc_b(\lambda) f = \frac{1}{\gamma(x)} (\nu \wedge f) + \nc_b(\lambda) f = 0,\: f = \nu \wedge E\vert_{\Gamma}.$$      

The outgoing resolvent of the problem 
 \begin{equation*} 
      \begin{cases} \curl E = -\lambda H + F_1,\: \curl H = \lambda E + F_2\:  {\rm in}\:\Omega,\\
      \nu \wedge E= 0\: {\rm on} \: \Gamma,\\
      (E, H) : (\ii \lambda) -outgoing,
      \end{cases}
      \end{equation*} 
  is analytic  for $\Re \lambda < 0$ since the above problem corresponds to a self-adjoint operator.  
  Therefor we can prove that $\cc_b(\lambda)$ is analytic for $\Re \lambda < 0.$ In the same way from the fact that for $\Re \lambda < 0$ there are no non trivial solutions of the problem   
  \begin{equation*}
     \begin{cases} \curl E = -\lambda H,\: \curl H = \lambda E\:  {\rm in}\:\Omega,\\
      \nu \wedge H= 0\: {\rm on} \: \Gamma,\\
      (E, H) : (\ii \lambda)-outgoing,
      \end{cases}
      \end{equation*}  
  one concludes that $\nc_b(\lambda)^{-1} $ is analytic for $\Re \lambda < 0$. As in Section 2, one deduces that $\cc_b(\lambda)^{-1} $ is a meromorphic operator valued function for $\Re \lambda < 0$ (see (\ref{eq:2.6}) and Remark 1).  Assuming $\gamma(x) \neq 1,\: \forall x \in \Gamma,$ according to the results in \cite{CPR1}, for every $\ep > 0$ and every $M \in \N, \: M \geq 1$ the eigenvalues of $G_b$ lie in $\Lambda_{\ep} \cup {\mathcal R}_M.$ Next one can establish a trace formula involving $(\lambda - G_b)^{-1}$ and for the analysis of the counting function of the eigenvalue of $G_b$  in $\Lambda$
  it is possible to apply the strategy of Sections 4, 5 combined with the semi-classical parametrix constructed in \cite{V3}. 
      
   \section*{Acknowledgments} Thanks are due to the referees for their critical comments and useful suggestions leading to an improvement  of  the previous version of the paper.   
       
      \vspace{0.6cm}
             

\begin{thebibliography}{99} 
\bibitem{CH} F. Cakoni and H. Haddar, {\em Transmission eigenvalues in Inverse Scattering Theory.} Inside and Out II, MSRI Publications 60, 2012.

\bibitem{CPR} F. Colombini, V. Petkov and J. Rauch, {\em Spectral problems for non elliptic symmetric systems with dissipative boundary conditions}, J. Funct. Anal. {\bf 267} (2014), 1637-1661.

\bibitem{CPR1} F. Colombini, V. Petkov and J. Rauch, {\em Eigenvalues of the Maxwell's equations with dissipative boundary conditions}, Asymptotic Analysis, \:{\bf  99} (2016), 105-124.
\bibitem{CP}  F. Colombini and  V. Petkov, {\em Weyl formula for the negative dissipative eigenvalues of Maxwell's equations}, Archiv der Mathematik, {\bf 110} (2018), 183-195.
 
\bibitem{DS} M. Dimassi and J. Sj\"ostrand, {\em Spectral asymptotics in semi-classical limits}, London Mathematical Society, Lecture Notes Series, {\bf 268}, Cambridge University Press, 1999.

\bibitem{LP} P. Lax and R. Phillips, {\em Scattering Theory}, Second edition, Academic Press, New York, 1989.

\bibitem{LP1}  P. Lax and R. Phillips, {\em Scattering theory for dissipative systems}, J. Funct. Anal. {\bf 14} (1973), 172-235.

\bibitem{Ma1} A. Majda, {\em Disappearing solutions for dissipative wave equation}, Indiana Univ. Math. J. {\bf 24} (12) (1974), 1119-1133.

\bibitem{Ma} A. Majda, {\em The location of the spectrum of the dissipative acoustic operator}, Indiana Univ. Math. J. {\bf 25} (1976), 973-987.

\bibitem{M} R. Melrose, {\em Polynomial bound on the distribution of poles in scattering by an obstacle}, Journ\'ees Equations aux D\'eriv\'ees Partielles, (1984), 1-8.

\bibitem{NN} Hoai-Minh Nguyen and Quoc-Hung Hguyen, {The Weyl law of the transmission eigenvalues and the completeness of generalised transmission eigenfunctions}, J. Funct. Anal, {\bf  281} (2001), 1091.

\bibitem{P1} V. Petkov, {\em Location of  eigenvalues for the wave equation with dissipative boundary conditions}, Inverse Probl. Imagining, {\bf 10} (2016), 1111-1139.

\bibitem{PV} V. Petkov and G. Vodev, {\em Asymptotics of the number of the interior transmission eigenvalues}, J. Spectral Theory, {\bf 7} (2017), 1-31.

\bibitem{R} L. Robiano, {\em Spectral analysis of the interior transmission eigenvalue problem}, Inverse
Problems, {\bf 29}, (2013), 104001.

\bibitem{SV} J. Sj\"ostrand and G. Vodev, {\em Asymptotics of the number of Rayleigh resonances}, Math. Ann. {\bf 309} (1997), 287-306.

\bibitem{S} J. Sj\"ostrand, {\em Weyl law for semi-classical resonances with random perturbed potentials}, M\'emoires de SMF, {\bf 136} (2014).

\bibitem{V} G. Vodev, {\em Transmission eigenvalue-free regions}. Commun. Math. Phys. {\bf 336} (2015), 1141-1166.

\bibitem{V1} G. Vodev, {\em High-frequency approximation of the interior Dirichlet-to-Neumann map and applications to the transmission eigenvalues}, Anal. \& PDF,  {\bf 11} (1) (2018), 213-236.

\bibitem{V2} G. Vodev, {\em Parabolic transmission eigenvalues in the degenerate isotropic case}, Asymptotic Analysis, {\bf 106} (2018), 147-168.

\bibitem{V3} G. Vodev, {\em Semiclassical parametrix for the Maxwell equation and applications to the electromagnetic transmission eigenvalues}, Res. Math. Sci. {\bf 8} (3) (2021), Paper 35.

\end{thebibliography}
\end{document}